\numberwithin{equation}{section}
\newtheorem{theorem}{Theorem}[section]
\newtheorem{lemma}[theorem]{Lemma}
\newtheorem{proposition}[theorem]{Proposition}
\newtheorem{corollary}[theorem]{Corollary}
\theoremstyle{definition}
\theoremstyle{remark}
\newtheorem{remark}[theorem]{Remark}
\newtheorem{fact}[theorem]{Fact}
\newtheorem{example}[theorem]{Example}
\newtheorem{observation}[theorem]{Observation}
\newtheorem{discussion}[theorem]{Discussion}
\newtheorem{question}[theorem]{Question}
\newtheorem{conjecture}[theorem]{Conjecture}
\newtheorem{acknowledgement}{Acknowledgement}
\newcommand{\sat}{\operatorname{sat}}
\newcommand{\Se}{\operatorname{S}}
\newcommand{\HH}{\operatorname{H}}
\newcommand{\Ass}{\operatorname{Ass}}
\newcommand{\im}{\operatorname{im}}
\newcommand{\Ker}{\operatorname{ker}}
\newcommand{\grade}{\operatorname{grade}}
\newcommand{\Spec}{\operatorname{Spec}}
\newcommand{\rad}{\operatorname{rad}}
\newcommand{\Ht}{\operatorname{ht}}
\newcommand{\pd}{\operatorname{p.dim}}
\newcommand{\V}{\operatorname{V}}
\newcommand{\Ext}{\operatorname{Ext}}
\newcommand{\G}{\operatorname{G}}
\newcommand{\Sym}{\operatorname{Sym}}
\newcommand{\R}{\operatorname{R}}
\newcommand{\COH}{\operatorname{H}}
\newcommand{\depth}{\operatorname{depth}}
\newcommand{\lo}{\longrightarrow}
\newcommand{\fm}{\frak{m}}
\newcommand{\fp}{\frak{p}}
\newcommand{\fq}{\frak{q}}
\newcommand{\fa}{\frak{a}}
\newcommand{\PP}{\mathbb{P}}
\begin{document}

\author[]{mohsen asgharzadeh }

\address{}
\email{mohsenasgharzadeh@gmail.com}

\title[ ]
{on the rigidity of symbolic powers }

\subjclass[2010]{ Primary  13D40;  13F20.}
\keywords{Symbolic powers; regular rings; local cohomology; rigidity.
 }

\begin{abstract}We deal with the rigidity
conjecture of symbolic powers over regular rings. This was asked by Huneke.
Along with our investigation, we  confirm a conjecture \cite[Conjecture 3.8]{9}.
\end{abstract}

\maketitle

\smallskip

\section{ Introduction}

In this note $(R,\fm)$ is a regular local ring
of dimension $d$.
Recall that the $n$-the \textit{symbolic power} of an ideal $I$ defined by $I^{(n)}:=\bigcap_{
\fp\in \Ass(I)}(I^nR_{\fp}\cap R)$. Recall from   \cite[Question 31]{hr} that:

\begin{question}
Let $\fp\in
\Spec(R)$. If $\fp^{(d)} = \fp^{d}$, does it follow that $\fp^n=\fp^{(n)}$
for all $n \geq 1$?
\end{question}

Question 1.1 is true in dimension $3$, see \cite[Corollary 2.5]{h}. This uses the \textit{intersection multiplicity} due to Serre. Also, Question 1.1 is true for $1$-dimensional Gorenstein prime ideals of a $4$-dimensional regular ring. This was proved by Huneke, see \cite[Corollary 2.6]{h}. Huneke and Ulrich extended this to the class of $1$-dimensional prime ideals of regular rings that are \textit{licci}, see \cite{hu}.
 In Theorem \ref{f} we show that:

\begin{theorem} Let $I$ be a Cohen-Macaulay height-two  ideal generated by exactly $d$ elements in  a regular local ring $R$ of dimension $d>2$.  Suppose $I$
is locally complete intersection. Then $I^i= I^{(i)}$  for all $i<d-1$ and $I^n\neq I^{(n)}\textit{ }\forall n\geq d-1.$
 \end{theorem}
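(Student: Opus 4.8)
The plan is to reduce the assertion to a single depth computation and then to resolve \emph{all} the powers $I^{n}$ at once via the Rees algebra of $I$.

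First I would record the translation into depth. Since $I$ has height two, is a complete intersection on the punctured spectrum, and $\dim R/I=d-2\geq 1$, the only prime which can be embedded in any power $I^{n}$ is $\fm$: localizing at a prime $\fp\neq\fm$ one has $(I^{n})_{\fp}=(I_{\fp})^{n}$, and powers of a complete intersection are unmixed. Hence $I^{(n)}$ is the $\fm$-saturation of $I^{n}$, and $I^{(n)}/I^{n}=H^{0}_{\fm}(R/I^{n})$. Therefore $I^{n}=I^{(n)}$ if and only if $\depth_{R}(R/I^{n})>0$, and it suffices to prove
$$\depth_{R}(R/I^{n})=\max\{\,d-1-n,\ 0\,\}\qquad(n\geq 1).$$

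Next I would analyse the Rees algebra $\mathcal{R}(I)=\bigoplus_{n\geq0}I^{n}$. Being perfect of height two, $I$ is strongly Cohen--Macaulay; being locally a complete intersection off $\fm$ with $\mu(I)=d=\Ht\fm$, it satisfies the condition $G_{\infty}$ (i.e. $\mu(I_{\fp})\leq\Ht\fp$ for every $\fp\in\V(I)$). By the theory of blowup algebras (Huneke; Herzog--Simis--Vasconcelos) such an ideal is of linear type, so $\mathcal{R}(I)=\Sym(I)=R[T_{1},\dots,T_{d}]/(\ell_{1},\dots,\ell_{d-1})$, where $\ell_{j}=\sum_{i}a_{ij}T_{i}$ and $(a_{ij})$ is the Hilbert--Burch matrix of a \emph{minimal} free resolution $0\to R^{d-1}\to R^{d}\to I\to 0$. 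Counting dimensions, $\dim R[\underline{T}]=2d$ while $\dim\mathcal{R}(I)=d+1$; hence the ideal $(\ell_{1},\dots,\ell_{d-1})$ has height $d-1$ and is generated by $d-1$ elements in the Cohen--Macaulay ring $R[\underline{T}]$, so the $\ell_{j}$ form a regular sequence and the Koszul complex $K_{\bullet}=K_{\bullet}(\ell_{1},\dots,\ell_{d-1};R[\underline{T}])$ is a graded free resolution of $\mathcal{R}(I)$.

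Then I would pass to the degree-$n$ strand $[K_{\bullet}]_{n}$. This is an exact complex of finite free $R$-modules, $[K_{p}]_{n}\cong R^{\binom{d-1}{p}\binom{n-p+d-1}{d-1}}$ (which vanishes as soon as $p>n$), augmenting onto $[\mathcal{R}(I)]_{n}=I^{n}$; its length is $\min\{n,d-1\}$. Every entry of every differential of $[K_{\bullet}]_{n}$ is, up to sign, one of the $a_{ij}$, and these lie in $\fm$ because the resolution of $I$ is minimal; likewise the augmentation $[K_{0}]_{n}\to R$ has image $I^{n}\subseteq\fm$. Splicing $[K_{\bullet}]_{n}$ with $0\to I^{n}\to R\to R/I^{n}\to 0$ thus yields the minimal free resolution of $R/I^{n}$, of length $1+\min\{n,d-1\}$, and the Auslander--Buchsbaum formula gives $\depth_{R}(R/I^{n})=d-1-\min\{n,d-1\}=\max\{d-1-n,0\}$, the formula sought. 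Consequently, for $n<d-1$ we get $\depth_{R}(R/I^{n})\geq 1$, whence $H^{0}_{\fm}(R/I^{n})=0$ and $I^{n}=I^{(n)}$; and for $n\geq d-1$ we get $\depth_{R}(R/I^{n})=0$, whence $\fm\in\Ass_{R}(R/I^{n})$ and $0\neq H^{0}_{\fm}(R/I^{n})=I^{(n)}/I^{n}$, i.e. $I^{n}\neq I^{(n)}$.

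The step I expect to be the main obstacle is establishing the explicit complete-intersection presentation of $\mathcal{R}(I)$: checking that the linear-type hypotheses really apply and that $(\ell_{1},\dots,\ell_{d-1})$ has height $d-1$. Coupled with this is the minimality of $[K_{\bullet}]_{n}$ in its top homological degree $p=d-1$ — it is exactly this, together with the fact that the term $[K_{d-1}]_{n}$ is nonzero precisely when $n\geq d-1$, that produces the sharp threshold and the strict inequality $I^{n}\neq I^{(n)}$ for all $n\geq d-1$.
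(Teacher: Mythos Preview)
Your proof is correct and reaches the same conclusion as the paper, but by a more uniform route. Both arguments begin identically: translate the problem to positivity of $\depth(R/I^{n})$ via $I^{(n)}/I^{n}=H^{0}_{\fm}(R/I^{n})$, invoke Hilbert--Burch, observe that $I$ is strongly Cohen--Macaulay and $G_{\infty}$, and build the Koszul complex on the linear forms $\ell_{j}=\sum_{i}a_{ij}T_{i}$ over $R[\underline{T}]$. From there the paper and you diverge. The paper only uses $\Sym^{i}(I)\cong I^{i}$ for $i\leq d$, checks that the degree-$i$ strand of the Koszul complex has finite-length homology (by localizing at primes $\fp\neq\fm$ and using the locally-complete-intersection hypothesis), and then appeals to the \emph{new intersection theorem} to get acyclicity for $i<d$; the range $n\geq d-1$ is handled indirectly, combining the constant-depth result of Huneke--Ulrich with a $\mu$-bound (Morey's criterion) to derive a contradiction. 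You instead establish once and for all that $\ell_{1},\dots,\ell_{d-1}$ is a regular sequence in $R[\underline{T}]$, so the Koszul complex resolves $\mathcal{R}(I)$ globally; every strand is then automatically exact and, being visibly minimal, yields $\pd_{R}(R/I^{n})=1+\min\{n,d-1\}$ in one stroke. This is cleaner and avoids both the new intersection theorem and the indirect endgame.

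One point deserves a word of care. Your height computation $\Ht(\ell_{1},\dots,\ell_{d-1})=2d-(d+1)=d-1$ tacitly uses the dimension formula $\dim S=\Ht(J)+\dim(S/J)$ in $S=R[\underline{T}]$, which fails in general for polynomial rings over a local ring (e.g.\ $(xT-1)$ in $k[[x]][T]$). The fix is exactly what you would expect: since $(\ell_{j})$ is homogeneous in the $T$-grading and $S$ is $\mathbb{N}$-graded with $S_{0}=R$ local, localize at the maximal graded ideal $\mathfrak{M}=\fm+(T_{1},\dots,T_{d})$. There $S_{\mathfrak{M}}$ is regular local of dimension $2d$, $\dim\mathcal{R}(I)_{\mathfrak{M}}=\dim\mathcal{R}(I)=d+1$, so $\Ht((\ell_{j})S_{\mathfrak{M}})=d-1$ and the $\ell_{j}$ form a regular sequence in $S_{\mathfrak{M}}$; graded Nakayama then transports this back to $S$. (Equivalently, one can verify Avramov's Fitting-ideal criterion: the locally-complete-intersection hypothesis forces $I_{t}(\phi)$ to be $\fm$-primary for $t\leq d-2$, which gives $\grade I_{t}(\phi)\geq d+1-t$.) With this adjustment your argument is complete.
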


In the graded situation and over polynomial rings, the claims $I^i= I^{(i)}$  for all $i<d-1$ and $I^{d-1}\neq I^{(d-1)}$ are in the  recent preprint \cite{9}.
One may try to drop the
conditions forced over $I$. Such a dream overlaps with the following recent conjecture  based on "\textit{computer experiments}":

\begin{conjecture}\label{conj} (See \cite{9})
 Let $X\subset \PP^3_k$
be a subvariety  of codimension $2$.
Assume that there is a point $\fp\in X$ such that the localization of $I_X$ at $\fp$ is not a complete
intersection. Then  $\forall m \geq 2$, the saturation of $I^m_X$ has an embedded
component at $\fp$. In particular, $I^m_X\neq I^{(m)}_X$  $\forall m \geq 2$.
\end{conjecture}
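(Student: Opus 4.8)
The plan is to reduce the projective assertion to a local statement at $\fp$ and then to the non-vanishing of a single local cohomology module. First I would note that $\fp$ cannot be a minimal prime of $I_X$: at a minimal prime $\fq$ of a radical ideal one has $(I_X)_{\fq}=\fq R_{\fq}$, the maximal ideal of the $2$-dimensional regular local ring $R_{\fq}$, which is a complete intersection. Hence $\fp$ properly contains a component of $X$, and since $\codim_{\PP^3}X=2$ and $\fp$ is a scheme point of $X$ this forces $\Ht\fp=3$, i.e. $\fp$ is a closed point. Put $(A,\fn):=R_{\fp}$, a regular local ring of dimension $3$, and $J:=(I_X)_{\fp}$. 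Then $A/J$ is reduced and equidimensional of dimension $1$, hence Cohen--Macaulay, so $J$ is perfect of codimension $2$ and admits a Hilbert--Burch resolution $0\to A^{n-1}\xrightarrow{M}A^{n}\to J\to 0$ with $n=\mu(J)\ge 3$ (the non--complete-intersection hypothesis). Saying that $\sat(I_X^m)$ has an embedded component at $\fp$ is saying $\fp\in\Ass(R/I_X^m)$, i.e. after localizing $\fn\in\Ass(A/J^m)$. Because every prime of $A$ containing $J$ other than $\fn$ has height $2$ and $J$ is there the maximal ideal of a $2$-dimensional regular local ring (a complete intersection, so $J^{(m)}$ and $J^m$ agree there), the module $J^{(m)}/J^m$ has finite length and $\fn\in\Ass(A/J^m)\iff J^{(m)}\supsetneq J^m$. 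Thus Conjecture~\ref{conj} is equivalent to: $J^{(m)}\neq J^m$ for all $m\ge 2$.

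Next I would treat the base case $m=2$. From $0\to J/J^{2}\to A/J^{2}\to A/J\to 0$ and $H^{0}_{\fn}(A/J)=0$ (here $\depth A/J=1$), the long exact local cohomology sequence gives $H^{0}_{\fn}(A/J^{2})\cong H^{0}_{\fn}(J/J^{2})$; since $H^{0}_{\fn}(A/J^{2})=(J^{2}:_{A}\fn^{\infty})/J^{2}=J^{(2)}/J^{2}$, this says $J^{(2)}\neq J^{2}$ precisely when the conormal module $J/J^{2}$ has depth $0$ over $A/J$. Now $J/J^{2}=\operatorname{coker}(\bar M)$ for $\bar M:=M\bmod J$, an $(n\times(n-1))$-matrix over $A/J$, and from $J=I_{n-1}(M)$ one gets $I_{n-1}(\bar M)=0$, while $\Ht_{A}I_{n-2}(M)=3$ forces $I_{n-2}(\bar M)$ to be a nonzero proper, hence $\fn$-primary, ideal of $A/J$ of grade exactly $1$; this ``rank drop seen only in codimension $1$'' is the degeneracy responsible for the torsion in $J/J^{2}$. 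For $n=3$ the resulting failure of $A/J^{2}$ to be Cohen--Macaulay is exactly the $d=3$ instance of Theorem~\ref{f}. For $n\ge 4$ I would argue separately: either directly from the degenerate presentation $\bar M$ (comparing the Hilbert function of $A/J^{2}$ with the one forced by the generic Hilbert--Burch situation), or by linking $J$ to a codimension-$2$ perfect ideal with fewer minimal generators and transporting the statement through the linkage.

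Finally I would propagate from $m=2$ to all $m\ge 2$. Granting $\fn\in\Ass(A/J^{2})$, the aim is $\fn\in\Ass\big(J^{m-1}/J^{m}\big)=\Ass\big([\gr_{J}A]_{m-1}\big)$ for every $m\ge 2$; feeding this into the local cohomology sequences of $0\to J^{m-1}/J^{m}\to A/J^{m}\to A/J^{m-1}\to 0$ then yields $H^{0}_{\fn}(A/J^{m})\neq 0$ inductively. The inputs I would use here are $J\cdot J^{(m)}\subseteq J^{(m+1)}$ together with eventual stability of $\Ass\big([\gr_{J}A]_{m}\big)$, so that an $\fn$-embedded prime of $\gr_{J}A$, once present, never disappears. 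The hard part will be exactly this last step together with the $n\ge 4$ part of the base case: one must rule out that some higher ordinary power $J^{m}$ becomes unmixed even though $J^{2}$ was not, and one must control conormal modules of ideals minimally generated by more than $\Ht+1$ elements. I expect both to need genuine information about the Rees algebra and associated graded ring of $J$ — their Cohen--Macaulay defect, reduction number, and the behaviour of the Hilbert functions of the $A/J^{m}$ — rather than a purely homological shortcut; and since $A$ is three-dimensional, the positivity and vanishing theory of Serre multiplicities exploited in \cite{h} may also be available as leverage.
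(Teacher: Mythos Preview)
The paper does not prove Conjecture~\ref{conj} in full; it only establishes the \emph{irreducible} case (Theorem~\ref{tst}), and the general reduced--unmixed case is left open. Your localization step is correct and matches the paper's opening move: one reduces to a height-two ideal $J=(I_X)_{\fp}$ in the three-dimensional regular local ring $A=R_{\fp}$, and the assertion becomes $J^{(m)}\neq J^{m}$ for all $m\ge 2$. But from here the paper's argument is far shorter than yours. When $X$ is irreducible, $J$ is a \emph{prime} ideal of dimension one which is not a complete intersection, and the paper simply invokes Lemma~\ref{par1} (Huneke's result in \cite{h}, which rests on Serre's intersection multiplicities) to conclude $J^{(m)}\neq J^{m}$ for every $m>1$ in one stroke, then transfers this back to $R$ via Lemma~\ref{st}. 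There is no Hilbert--Burch analysis, no separate base case, and no propagation step; the Serre-multiplicity input you mention only as possible ``leverage'' at the very end is in fact the entire engine of the paper's proof.

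Your proposal is more ambitious---it does not assume $J$ prime---but, as you yourself acknowledge, it has genuine gaps. For the base case you only really handle $n=\mu(J)=3$ by pointing to Theorem~\ref{f}; for $n\ge 4$ you offer a plan (degeneracy of $\bar M$, or linkage) but no argument. The propagation step is also incomplete: ``eventual stability of $\Ass\big([\gr_{J}A]_{m}\big)$'' only says the sequence of associated primes stabilizes for $m\gg 0$, not that an embedded $\fn$ persists from $m=2$ onward. What you actually need is the monotone inclusion $\Ass(A/J^{m})\subseteq\Ass(A/J^{m+1})$, and that requires extra structure on the generators of $J$---compare the paper's use of weak $d$-sequences via \cite{dseq} in Proposition~\ref{thm}(b)---which is not automatic for an arbitrary perfect height-two ideal. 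So even restricted to the irreducible case your outline does not close without ultimately appealing to Lemma~\ref{par1}, and once you appeal to it the rest of your machinery is unnecessary.
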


In Section 4 we show
Conjecture 1.3 is true in the  irreducible case.
We say an ideal $I$ is \textit{rigid}, if $I^n=I^{(n)}$ for all $n$ provided $I^{(i)} = I^{i}$ for all $i\leq \dim R$.
Recall from  \cite{hr}:
Is any prime ideal rigid?
Here, we present a sample:
\begin{corollary}
Any Cohen-Macaulay prime ideal   $\fp$ of height $2$ in a $4$-dimensional regular local ring  is rigid. In fact  $\fp^n=\fp^{(n)}$
for all $n \geq 1$ provided $\fp^{(3)} = \fp^{3}$.
\end{corollary}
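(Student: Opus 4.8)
The plan is to reduce the assertion to the single implication: \emph{if $\fp^{(3)}=\fp^{3}$, then $\fp$ is a complete intersection.} Granting this, one is done. Indeed, if $\fp$ is generated by a regular sequence then $\gr_{\fp}(R)$ is a polynomial ring over the domain $R/\fp$, so each $\fp^{n}/\fp^{n+1}$ is $R/\fp$-free; running the evident filtration of $R/\fp^{n}$ then gives $\Ass(R/\fp^{n})=\{\fp\}$, hence $R/\fp^{n}$ is unmixed and $\fp^{n}=\fp^{(n)}$ for every $n\ge 1$, \emph{unconditionally}. Since $3<\dim R=4$, this not only yields that $\fp$ is rigid but also gives the stronger ``in fact'' statement: the lone equality $\fp^{(3)}=\fp^{3}$ already forces $\fp^{n}=\fp^{(n)}$ for all $n$.

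To prove the implication I would argue the contrapositive: supposing $\fp$ is not a complete intersection, exhibit $m\ge 2$ (indeed $m=3$ will do) with $\fp^{(m)}\neq\fp^{m}$, splitting into cases according to whether $\fp$ is locally a complete intersection on the punctured spectrum $U=\Spec(R/\fp)\setminus\{\fm/\fp\}$. \textbf{Case A:} $\fp$ fails to be locally a complete intersection at some point of $U$. Since $\fp$ is prime, $\V(\fp)$ is irreducible, so the irreducible case of Conjecture~\ref{conj} established in Section~4 applies, in its local form, and produces $\fm$ as an embedded prime of $R/\fp^{m}$ for every $m\ge 2$; thus $\fp^{(m)}\neq\fp^{m}$ for all $m\ge 2$, in particular $\fp^{(3)}\neq\fp^{3}$, contradicting the hypothesis. \textbf{Case B:} $\fp$ is locally a complete intersection throughout $U$ but $\fp$ itself is not a complete intersection. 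Then $\mu(\fp)=\ell>2$. If $\ell=4=\dim R$, Theorem~\ref{f} applies verbatim (with $d=4$, so $d-1=3$) and gives $\fp^{n}\neq\fp^{(n)}$ for all $n\ge 3$, again contradicting $\fp^{(3)}=\fp^{3}$.

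What remains is Case~B with $\ell=3$ or $\ell\ge 5$, and this is the crux. Here I would use that a Cohen--Macaulay height-two ideal can be linked, through height-two complete intersections, to Cohen--Macaulay height-two ideals: choosing $(f,g)\subseteq\fp$ a complete intersection spanned by sufficiently general elements of a minimal generating set and passing to $\fp':=(f,g):\fp$, the ideal $\fp'$ is again Cohen--Macaulay of height two, is locally a complete intersection on its punctured spectrum exactly where $\fp$ is, and its minimal number of generators is read off the Hilbert--Burch matrix; after finitely many such links one reaches either a complete intersection or an ideal with $\mu=4$, where Case~B has already been settled. The subtle step — and the point I expect to be the main obstacle — is carrying the hypothesis $\fp^{(3)}=\fp^{3}$ across a link, since linkage is not compatible with ordinary powers. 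One circumvents this by working on the punctured spectrum: as $\fp$ is lci on $U$, every $\fp^{k}$ is unmixed away from $\fm$, so $\fp^{(3)}/\fp^{3}\cong H^{0}_{\fm}(R/\fp^{3})$ and the equality $\fp^{(3)}=\fp^{3}$ is just the assertion $\depth(R/\fp^{3})\ge 1$, a condition one can follow through the construction. If the linkage transfer proves too delicate, the fallback is to redo, for an arbitrary Cohen--Macaulay height-two ideal that is lci on $U$, the local-cohomology estimate underlying Theorem~\ref{f}, showing $\depth(R/\fp^{3})\ge 1$ is impossible unless $\ell=2$. Case~A and the complete-intersection case are routine given what the paper has already proved; Case~B with $\ell\neq 4$ is where the effort lies.
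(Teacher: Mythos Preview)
Your central reduction---``if $\fp^{(3)}=\fp^{3}$ then $\fp$ is a complete intersection''---is false, and this is exactly where your linkage argument for $\ell=3$ is doomed. Example~3.1 (the twisted cubic) is a Cohen--Macaulay height-two prime in a four-dimensional regular ring that is locally a complete intersection on the punctured spectrum, has $\mu(\fp)=3>2$, and yet satisfies $\fp^{n}=\fp^{(n)}$ for all $n$. So in Case~B with $\ell=3$ you are attempting to derive a contradiction that simply is not there; no amount of linkage bookkeeping can rescue it. The implication you should be aiming for is the weaker one ``$\fp^{(3)}=\fp^{3}\Rightarrow\mu(\fp)\le 3$,'' after which one needs a separate argument that $\mu(\fp)\le 3$ (together with locally CI and Cohen--Macaulay of height two) already forces $\fp^{n}=\fp^{(n)}$ for all $n$.

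The paper's proof is organized exactly this way and is much shorter than your proposal. Your Case~A matches the paper (Theorem~\ref{tst}). In Case~B the paper splits on $\mu(\fp)\le 3$ versus $\mu(\fp)\ge 4$. For $\mu(\fp)\le 3$ it invokes Lemma~\ref{twoc1} (the direction ``$\mu(I)\le d-1\Rightarrow I^{i}=I^{(i)}$ for all $i$''), which immediately gives the conclusion with no contrapositive and no linkage. For $\mu(\fp)\ge 4$ it uses Corollary~\ref{f1}, which covers \emph{all} $\mu\ge d$ at once (not just $\mu=d$ as in Theorem~\ref{f}) and yields $\fp^{3}\neq\fp^{(3)}$, contradicting the hypothesis. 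Thus your Case~B with $\ell\ge 5$ also needs no linkage; you simply cited Theorem~\ref{f} where Corollary~\ref{f1} was the right tool.
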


In Section 5 we use the machinery of birational geometry  to  construct   non-rigid ideals (the ideals may  not be radical).

\section{Towards the rigidity in codimension one}

For the simplicity of the reader we collect some well-known results that we need:

\textbf{Subsection 2.A}: \textmd{Preliminaries.}
Recall that  an ideal $I$ is called \textit{complete intersection} if $I$ is generated
by a regular sequence of length equal to height of $I$.

\begin{lemma}(See \cite[Corollary 2.5]{h}) \label{par1}
Let $(R,\fm)$ be a regular local ring
of dimension $3$ and $\fp$ be a prime ideal of dimension one
which is not a complete intersection. Then $\fp^i\neq\fp^{(i)}$ for all $i>1$.
\end{lemma}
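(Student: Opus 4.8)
The plan is to convert the statement into a depth computation for the rings $R/\fp^i$ and then to rule out the Cohen--Macaulay case by Serre's intersection multiplicity, following \cite{h}.

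\emph{Step 1: reduction.} Since $\fp$ is a height-two prime of the three-dimensional regular local ring $R$, the quotient $R/\fp$ is a one-dimensional domain, hence Cohen--Macaulay; thus $\fp$ is a perfect ideal of grade two and, by the Hilbert--Burch theorem, admits a resolution $0\to R^{\mu(\fp)-1}\xrightarrow{\ \phi\ }R^{\mu(\fp)}\to\fp\to 0$. In particular $\fp$ fails to be a complete intersection precisely when $\mu(\fp)\ge 3$. For every $i\ge 1$, any embedded prime of $R/\fp^i$ strictly contains $\fp$, hence has height three, hence equals $\fm$; so $\Ass(R/\fp^i)\subseteq\{\fp,\fm\}$ and $\fp^{(i)}$ is exactly the $\fp$-primary component of $\fp^i$. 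Consequently $\fp^i=\fp^{(i)}$ if and only if $\fm\notin\Ass(R/\fp^i)$, if and only if $\depth(R/\fp^i)\ge 1$, if and only if (as $\dim R/\fp^i=1$) the ring $R/\fp^i$ is Cohen--Macaulay. So the lemma amounts to the implication: \emph{if $\mu(\fp)\ge 3$, then $R/\fp^i$ is not Cohen--Macaulay for any $i\ge 2$.}

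\emph{Step 2: the Cohen--Macaulay case.} Suppose, for contradiction, that $R/\fp^i$ is Cohen--Macaulay for some $i\ge 2$. Choose $x\in\fm$ general, so that $x\notin\fp$, the ring $\bar R:=R/xR$ is a two-dimensional regular local ring, $\fp+xR$ is $\fm$-primary, and $x$ is a nonzerodivisor on $R/\fp^i$ and on $R/\fp^{(i)}$. Now compute the multiplicity of the one-dimensional Cohen--Macaulay ring $R/\fp^i$ in two ways. By the associativity formula at the unique minimal prime $\fp$,
\[
 e(R/\fp^i)=\ell_{R_\fp}\!\big(R_\fp/\fp^iR_\fp\big)\cdot e(R/\fp)=\binom{i+1}{2}\,e(R/\fp),
\]
and, $x$ being a general nonzerodivisor, this equals $\ell\big(\bar R/(\fp^i+xR)\big)$. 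On the other hand, $R/\fp^i$ Cohen--Macaulay of codimension two forces $\pd_R R/\fp^i=2$, so $\fp^i$ again has a Hilbert--Burch resolution $0\to R^{s}\to R^{s+1}\to R\to R/\fp^i\to 0$ with $s+1=\mu(\fp^i)$; reducing this resolution modulo the general element $x$ (legitimate because $x$ is a nonzerodivisor on each term) presents $\bar R/(\fp^i+xR)$ as a finite-length perfect module over the two-dimensional regular ring $\bar R$ with $s+1$ generators, and hence with $\ell\big(\bar R/(\fp^i+xR)\big)$ bounded below in terms of $s$. Combining this lower bound with the exact value $\binom{i+1}{2}e(R/\fp)$ above, and using the companion reduction $\bar\phi$ of $\fp+xR$ (which has $\mu(\fp)+1\ge 4$ generators), one finds that $\mu(\fp)\ge 3$ is impossible; tracking Serre's Euler characteristic $\chi(-,R/xR)$ along the short exact sequence $0\to\fp^{(i)}/\fp^i\to R/\fp^i\to R/\fp^{(i)}\to 0$ (it is additive and vanishes on the finite-length term $\fp^{(i)}/\fp^i$) is what ties the two computations together. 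The numerical heart of this step is \cite[Corollary 2.5]{h}.

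\emph{Main obstacle.} The delicate point is that the conclusion is demanded for \emph{every} $i\ge 2$, not just for $i\gg 0$: asymptotic non-vanishing of $\ell(\fp^{(i)}/\fp^i)$ follows comparatively cheaply from comparing the Hilbert polynomials, or the analytic spreads, of the symbolic and ordinary Rees algebras of $\fp$ (or from the Cowsik--Nori theorem), but the small powers need a separate idea. I would handle this by establishing \emph{persistence} of the embedded prime $\fm$: first settle the base case $i=2$ by a direct Hilbert--Burch computation, manufacturing elements of $\fp^{(2)}\setminus\fp^2$ out of the $\mu(\fp)-1\ge 2$ syzygies encoded in $\phi$ --- such elements lie in $\fp^{(2)}$ because $\fp R_\fp$ is a complete intersection, while a generator count using $\mu(\fp)\ge 3$ keeps them outside $\fp^2$ --- and then propagate $\fm\in\Ass(R/\fp^i)$ from $i$ to $i+1$ by multiplying a nonzero class of $\fp^{(i)}/\fp^i$ by a general element of $\fp$ to land on a nonzero class of $\fp^{(i+1)}/\fp^{i+1}$. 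Pinning down these explicit elements and verifying the propagation is where the hypothesis ``$\fp$ is not a complete intersection'' is genuinely used, and it is the crux of the whole argument.
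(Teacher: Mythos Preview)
The paper itself offers no proof of this lemma: it is quoted verbatim as \cite[Corollary 2.5]{h} and used as a black box. So there is nothing in the paper to compare against; the question is whether your sketch stands on its own.

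Step~1 is fine, and the reformulation ``$\fp^i=\fp^{(i)}\iff R/\fp^i$ is Cohen--Macaulay'' is exactly the right starting point. Step~2, however, is not a proof: you set up a multiplicity count, assert an unspecified lower bound ``in terms of $s$'', and then write ``the numerical heart of this step is \cite[Corollary 2.5]{h}''---which is the statement you are proving. Nothing in what you wrote actually produces the contradiction with $\mu(\fp)\ge 3$; Huneke's argument hinges on a delicate comparison of $\ell(R/(\fp^i+xR))$ with Serre's intersection number $\chi(R/\fp^i,R/xR)$ and a nontrivial inequality relating length and minimal number of generators in a two-dimensional regular local ring, none of which you supply. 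Your ``Main obstacle'' paragraph then proposes an alternative route via persistence, but the propagation step is unjustified: for $a\in\fp^{(i)}\setminus\fp^i$ and $x\in\fp$ general, you do get $xa\in\fp^{(i+1)}$, but there is no reason given why $xa\notin\fp^{i+1}$; the multiplication map $\fp^{(i)}/\fp^i\xrightarrow{\cdot x}\fp^{(i+1)}/\fp^{i+1}$ can have kernel, and showing it is nonzero is essentially as hard as the original problem. Likewise, the base case $i=2$ is asserted (``manufacture elements \ldots out of the syzygies'') but not carried out. In short, the proposal identifies the right landscape but contains no complete argument for any $i\ge 2$; if you want an honest proof you must either reproduce Huneke's intersection-multiplicity inequality in full or replace the hand-waved persistence by a genuine mechanism (e.g.\ analyze the associated graded ring or use a $d$-sequence argument).
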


The grade of an ideal
$\frak a$ on a module $M$ is defined by $\grade_{A}(\frak a,M):=\inf\{i\in
\mathbb{N}_0|\Ext^i_R(R/{\frak a},M)\neq0\}.$ We use $\depth (M)$, when we deal with the maximal ideal of $\ast$-local rings.
Denote the minimal number of generators of $M$ by $\mu(M)$.

\begin{discussion}
An ideal $I\lhd R$ is called \textit{perfect} if $\pd(R/I)=\grade(I,R)$.
Also, $I$ is called \textit{strongly Cohen-Macaulay}, if its Koszul homologies $H_i(I,R)$ are all Cohen-Macaulay modules.
The ideal $I$ satisfies the $\G_\infty$ \textit{condition} if for all $\fp\in \V(I)$, one has $\mu (I_{\fp})\leq \Ht(\fp)$.
\end{discussion}

\begin{example}\label{av}(See \cite[Theorem 2.1(a)]{ah} and \cite[Supplement]{ah})\begin{enumerate}
\item[i)]
Perfect ideals of codimension two in a regular ring are strongly Cohen-Macaulay.
\item[ii)] If $\mu(I)\leq \Ht(I)+2$, $R$ is Gorenstein and $R/I$ is Cohen-Macaulay, then $I$ is strongly Cohen-Macaulay.
\end{enumerate}
\end{example}

\begin{lemma}\label{5}(See \cite[Lemma 2.7]{hu})
Let $R$ be a local Gorenstein ring, let $I$ be a perfect ideal
 which is strongly Cohen-Macaulay and $\G_\infty$. We write
$D := \dim(R/I)$. Then for all $n\geq d:=\mu (I)-\grade(I,R)$, we have
$\depth(R/ I^{n+1}) = D - d$.
\end{lemma}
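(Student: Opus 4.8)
The plan is to use the approximation complexes of Herzog--Simis--Vasconcelos and extract depths from their graded strands. Put $m:=\mu(I)$ and $g:=\grade(I,R)$, so $d=m-g$; since $I$ is perfect and $R$ is Gorenstein, $R/I$ is Cohen--Macaulay of dimension $D$ and possesses a canonical module $\omega$. Because $I$ is moreover strongly Cohen--Macaulay and satisfies $\G_\infty$, the ideal $I$ is of linear type (so the analytic spread of $I$ equals $m$, and $\dim R-m=D-d$) and its $\mathcal M$-complex
$$0\longrightarrow H_d(I,R)\otimes_R\bar S\longrightarrow\cdots\longrightarrow H_1(I,R)\otimes_R\bar S\longrightarrow H_0(I,R)\otimes_R\bar S\longrightarrow 0,\qquad \bar S:=(R/I)[T_1,\dots,T_m],$$
is acyclic and resolves $\gr_I(R)$ over $\bar S$ (gradings normalized so that the differentials have degree $0$). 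Its length is exactly $d$, because $H_i(I,R)=0$ for $i>d$ whereas $H_d(I,R)\ne 0$ (recall $\pd_R(R/I)=g$; in fact $H_d(I,R)\cong\omega$). By the strongly Cohen--Macaulay hypothesis each nonzero $H_i(I,R)$ is a Cohen--Macaulay $R/I$-module of dimension $D$ (for the dimension one may invoke Herzog's duality $H_{d-i}(I,R)\cong\Hom_{R/I}(H_i(I,R),\omega)$), so $H_i(I,R)\otimes_R\bar S$ is Cohen--Macaulay of dimension $D+m$ over $\bar S$; the iterated depth lemma along the acyclic complex then yields $\depth_{\bar S}\gr_I(R)\ge (D+m)-d=D+g=\dim R$, so $\gr_I(R)$ is Cohen--Macaulay.

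Next I would pass to the degree-$n$ graded strand, an exact complex of $R/I$-modules
$$0\longrightarrow H_d(I,R)^{c_{n-d}}\longrightarrow\cdots\longrightarrow H_1(I,R)^{c_{n-1}}\longrightarrow (R/I)^{c_n}\longrightarrow I^n/I^{n+1}\longrightarrow 0,\qquad c_j=\textstyle\binom{m-1+j}{m-1}$$
(with $c_j=0$ for $j<0$); it has length $\min\{n,d\}$, hence full length $d$ with nonzero top term $H_d(I,R)^{c_{n-d}}$ as soon as $n\ge d$. Since all the $H_i(I,R)$ occurring are Cohen--Macaulay of dimension $D$, the depth lemma gives $\depth(I^n/I^{n+1})\ge D-\min\{n,d\}\ge D-d$, and feeding this into the $I$-adic filtration of $R/I^{n+1}$ gives the easy inequality $\depth(R/I^{n+1})\ge D-d$ for every $n$. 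For the converse I would run an induction on $n\ge d$: from $0\to I^n/I^{n+1}\to R/I^{n+1}\to R/I^n\to 0$ and $\depth(R/I^n)\ge D-d$ the long exact local cohomology sequence shows $\HH^{D-d}_{\fm}(I^n/I^{n+1})$ injects into $\HH^{D-d}_{\fm}(R/I^{n+1})$; so the whole statement comes down to proving $\HH^{D-d}_{\fm}(I^n/I^{n+1})\ne 0$, equivalently $\depth(I^n/I^{n+1})=D-d$, for $n\ge d$.

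This sharpness claim is the crux and the part I expect to cost the most effort. By local duality over $R/I$ it is equivalent to $\Ext^d_{R/I}(I^n/I^{n+1},\omega)\ne 0$; dualizing the strand complex into $\omega$ — legitimate since each $H_i(I,R)$, being a maximal-dimensional Cohen--Macaulay $R/I$-module, is $\omega$-acyclic — identifies this $\Ext^d$ with the cokernel of the map $\Hom_{R/I}(H_{d-1}(I,R),\omega)^{c_{n-d+1}}\to\Hom_{R/I}(H_d(I,R),\omega)^{c_{n-d}}$ obtained by dualizing the last strand differential. One then shows that this last differential (built from contractions on Koszul homology and multiplication by the $T_j$) has all its coefficients in $\fm$ — here it matters both that $x_1,\dots,x_m$ minimally generates $I$ (so no Koszul cycle has a unit coordinate) and that the strongly Cohen--Macaulay hypothesis keeps the Koszul cycle modules $Z_i$ well enough behaved — whence its dual is also a minimal map and its cokernel surjects onto $\Hom_{R/I}(H_d(I,R),\omega)^{c_{n-d}}\otimes_{R/I}R/\fm\ne 0$ (nonzero for $n\ge d$, as $H_d(I,R)\ne 0$ and $c_{n-d}\ge 1$); the onset $n=d$ is exactly the first value for which the strand complex reaches its full length. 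Granting this, the induction of the second paragraph closes and $\depth(R/I^{n+1})=D-d$ for all $n\ge d$; the full argument is carried out in \cite[Lemma 2.7]{hu}. Alternatively, one can deduce the eventual value $\dim R-\ell(I)=D-d$ from Burch's inequality together with the Cohen--Macaulayness of $\gr_I(R)$, the content of the crux then being the precise stabilization range $n\ge d$.
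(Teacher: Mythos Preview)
The paper does not prove this lemma at all: it is stated with a bare citation to \cite[Lemma 2.7]{hu} and used as a black box, so there is no ``paper's own proof'' to compare against. Your sketch is in fact an outline of the Huneke--Ulrich argument itself (approximation $\mathcal M$-complex of Herzog--Simis--Vasconcelos, Cohen--Macaulayness of the graded strands coming from the SCM hypothesis, depth chasing along the $I$-adic filtration, and the sharpness step via minimality of the top differential), and you even say so explicitly at the end. As such it goes well beyond what the paper supplies; the outline is sound and faithful to the cited source, with the crux correctly identified as the nonvanishing of $\HH^{D-d}_{\fm}(I^n/I^{n+1})$ for $n\ge d$.
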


An ideal $I$ is called  almost complete intersection, if $\mu(I)=\Ht(I)+1$. The definition presented in \cite{com} is more general than this.

\begin{lemma}\label{com}(See \cite[Theorem 3.1]{com})
Let $R$ be a Cohen-Macaulay ring and $\fp$
be a prime ideal which is an almost complete intersection. Then $\fp^{(2)}=\fp^2$ if and only if $\fp R_{\fq}$ is generated by an $R_{\fq}$-sequence
$\forall\fq\in\V(\fp)$
such that $\dim ( ( R / \fp ) _{\fq})\leq1$.
\end{lemma}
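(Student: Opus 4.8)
Before reading the author's argument, here is the route I would take. Since both the equality $\fp^{(2)}=\fp^{2}$ and the complete-intersection condition on the various localizations $R_{\fq}$ can be tested after localizing $R$ at a maximal ideal, I would first reduce to the case that $(R,\fm)$ is local. The decisive move is to translate the problem into the language of the conormal module $\fp/\fp^{2}$, regarded as a module over the domain $R/\fp$: for $x\in\fp$ one has $x\in\fp^{(2)}$ exactly when $sx\in\fp^{2}$ for some $s\notin\fp$, i.e.\ exactly when the image of $x$ is a torsion element of $\fp/\fp^{2}$. As $\fp^{(2)}\subseteq\fp$, this identifies $\fp^{(2)}/\fp^{2}$ with $\operatorname{tors}_{R/\fp}(\fp/\fp^{2})$, so that
$$\fp^{(2)}=\fp^{2}\iff \fp/\fp^{2}\ \text{is a torsion-free}\ R/\fp\text{-module}.$$

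Next I would unwind the structure of $\fp/\fp^{2}$. Put $g:=\Ht\fp$. The almost complete intersection hypothesis gives $\mu(\fp)=g+1$, hence $\fp/\fp^{2}$ is an $R/\fp$-module generated by $g+1$ elements; and localizing at $\fp$, where $R_{\fp}$ is regular (this is automatic when $R$ is regular, and is the one point that needs care in the merely Cohen--Macaulay setting), shows that $(\fp/\fp^{2})_{\fp}$ is free of rank $g$ over $\kappa(\fp)$. So $\fp/\fp^{2}$ has constant generic rank $g$ and admits a presentation $(R/\fp)^{m}\xrightarrow{\ \varphi\ }(R/\fp)^{g+1}\to\fp/\fp^{2}\to 0$ in which $\varphi$ has generic rank $1$; in particular $I_{2}(\varphi)=0$, since this ideal of the domain $R/\fp$ vanishes at the generic point. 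I would then invoke the standard criterion from the Auslander--Bridger/Buchsbaum--Eisenbud theory of first and second syzygies: the cokernel of a map of finite free modules over a Noetherian domain is torsion-free if and only if the ideal of its $r\times r$ minors has grade $\ge 2$, where $r$ is the generic rank of the map. Applied to $\varphi$ (so $r=1$), and using that $R/\fp$ is Cohen--Macaulay so that grade equals height, this yields $\fp^{(2)}=\fp^{2}\iff\Ht_{R/\fp}I_{1}(\varphi)\ge 2$.

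It then remains to identify $\V(I_{1}(\varphi))$ with the non-complete-intersection locus of $\fp$. For $\fq\in\V(\fp)$: since $I_{2}(\varphi)=0$, Fitting's lemma shows that $(\fp/\fp^{2})_{\fq}$ is $(R/\fp)_{\fq}$-free precisely when $I_{1}(\varphi)\not\subseteq\fq/\fp$; and $(\fp/\fp^{2})_{\fq}$ is $(R/\fp)_{\fq}$-free if and only if $\mu(\fp R_{\fq})=g$ (Nakayama, together with $I_{2}(\varphi)=0$), which --- $R_{\fq}$ being Cohen--Macaulay with $\Ht\fp R_{\fq}=g$ --- is equivalent to $\fp R_{\fq}$ being generated by an $R_{\fq}$-sequence. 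Hence $\V(I_{1}(\varphi))=\{\fq\in\V(\fp):\fp R_{\fq}\ \text{is not a complete intersection}\}$, and the inequality $\Ht_{R/\fp}I_{1}(\varphi)\ge 2$ says exactly that $\fp R_{\fq}$ is a complete intersection for every $\fq\in\V(\fp)$ with $\dim(R/\fp)_{\fq}\le 1$. Concatenating the three equivalences proves the lemma.

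I expect the main obstacle to be the torsion-freeness criterion for $\coker\varphi$ used in the second paragraph: one must state it in exactly the form needed and either quote it or, in the present generic-rank-$1$ case, reprove it via the exact sequence $0\to\Ext^{1}_{R/\fp}\!\big((\fp/\fp^{2})^{\mathrm T},R/\fp\big)\to\fp/\fp^{2}\to(\fp/\fp^{2})^{\ast\ast}$ relating $\fp/\fp^{2}$ to its Auslander--Bridger transpose. A second, lighter, point is the pair of reductions made along the way --- to $R$ local and to $R_{\fp}$ regular (and, should one prefer a hands-on linkage computation writing $\fp=\fc+(b)$ with $\fc$ a complete intersection and comparing $\fp^{(2)}$ with the colon ideals $\fc:b^{k}$, also the harmless passage to $R[t]_{\fm R[t]}$ to make the residue field infinite) --- none of which disturbs symbolic powers or the complete-intersection locus.
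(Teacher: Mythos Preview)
The paper does not actually prove this lemma: it is listed in the preliminaries (Subsection~2.A) as a quoted result, with a bare citation to Huneke's article and no accompanying argument. So there is no proof in the paper to compare your attempt against.

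Your route via the conormal module is the standard one and is essentially how the result is proved in the cited source. The identification $\fp^{(2)}/\fp^{2}=\operatorname{tors}_{R/\fp}(\fp/\fp^{2})$ is correct, and reducing torsion-freeness of a module with a rank-one presentation matrix to a grade condition on $I_{1}(\varphi)$, then matching $\V(I_{1}(\varphi))$ with the non--complete-intersection locus via Fitting ideals, is the right mechanism.

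There is, however, one genuine gap that you flag but do not close: the regularity of $R_{\fp}$. Under the hypotheses as the paper records them ($R$ merely Cohen--Macaulay), nothing forces $R_{\fp}$ to be regular. If $\mu(\fp R_{\fp})=g+1$ then your presentation map $\varphi$ has generic rank $0$, not $1$; in that case $\fp/\fp^{2}\cong(R/\fp)^{g+1}$ is free, hence torsion-free, so $\fp^{(2)}=\fp^{2}$, while $\fp R_{\fp}$ is not a complete intersection and the right-hand side already fails at $\fq=\fp$. In other words the lemma, exactly as transcribed in the paper, is missing Huneke's standing hypothesis that $\fp$ be generically a complete intersection (equivalently that $R_{\fp}$ be regular). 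This omission is harmless for the paper, which only invokes the lemma in Proposition~\ref{thm} over a regular local ring; but for your write-up you must either restore that hypothesis or restrict to regular $R$. A smaller point in the same vein: your passage from $\grade_{R/\fp}I_{1}(\varphi)\ge 2$ to $\Ht_{R/\fp}I_{1}(\varphi)\ge 2$ and back uses that $R/\fp$ is Cohen--Macaulay, which again is not automatic from the hypotheses as stated and should be recorded.
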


\textbf{Subsection 2.B}: \textmd{Towards the rigidity in codimension one.}
For an $R$-module $M$, the $i^{th}$ local cohomology of $M$ with
respect to an ideal $\fa$ is defined by
$\HH^{i}_{\fa}(M):={\varinjlim}_n\Ext^{i}_{R} (R/\fa^{n},
M)$.
 By definition,  $\HH^0_{\fm}(R/I^n)=\frac{\bigcup_j (I^n:\fm^j)}{I^n}=:\frac{(I^n)^{\sat}}{I^n}.$
 Note that $\fm\notin\Ass(R/I^n)$ if and only if $\depth(R/I^n)>0$ if and only if $I^n=(I^n)^{\sat}$.
In   fact, our interest in 1-dimensional  ideals
coming from:

\begin{fact}\label{tr}
Let $I\lhd R$  be a radical ideal of dimension one. Then $I^{(n)}/ I^n=\HH^0_{\fm}(R/I^n)$.
\end{fact}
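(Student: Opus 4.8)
The plan is to feed the two identities recorded just above the statement---$\HH^0_\fm(R/I^n)=(I^n)^{\sat}/I^n$, together with a comparison of the associated primes of $I$, $I^n$ and $I^{(n)}$---into a short local cohomology exact sequence. Throughout one uses that a radical ideal $I$ with $\dim R/I=1$ has $\fm$ among its primes only if $\fm$ is a minimal prime, which I exclude (equivalently: $I$ is unmixed of dimension one); then $\Ass(R/I)=\operatorname{Min}(I)$ and $\dim R/\fp=1$ for each $\fp$ in it.

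First I would pin down $\Ass(R/I^n)$. Its minimal members are $\operatorname{Min}(I^n)=\operatorname{Min}(I)$, and any embedded prime strictly contains some $\fp\in\operatorname{Min}(I)$, hence is of dimension $0$, hence equals $\fm$; thus $\Ass(R/I^n)\subseteq\operatorname{Min}(I)\cup\{\fm\}$. The same dimension count shows that the only prime $\fq\neq\fm$ that can contain $I$ is already a minimal prime of $I$, so for every $\fq\neq\fm$ one has $(I^{(n)})_\fq=(I^n)_\fq$ (the unit ideal if $\fq\not\supseteq I$, and the defining property of the symbolic power if $\fq\in\operatorname{Min}(I)$). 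Hence $I^{(n)}/I^n$ is a finitely generated $\fm$-torsion module, so $\HH^0_\fm(I^{(n)}/I^n)=I^{(n)}/I^n$. On the other side, the defining intersection $I^{(n)}=\bigcap_{\fp\in\operatorname{Min}(I)}(I^nR_\fp\cap R)$ is itself an irredundant primary decomposition whose radicals are the pairwise incomparable minimal primes of $I^{(n)}$, so $\Ass(R/I^{(n)})=\operatorname{Min}(I)$ does not contain $\fm$; therefore $\depth(R/I^{(n)})>0$ and $\HH^0_\fm(R/I^{(n)})=0$.

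Applying $\HH^0_\fm(-)$ to $0\to I^{(n)}/I^n\to R/I^n\to R/I^{(n)}\to 0$ then yields an exact sequence $0\to\HH^0_\fm(I^{(n)}/I^n)\to\HH^0_\fm(R/I^n)\to\HH^0_\fm(R/I^{(n)})=0$, whence $\HH^0_\fm(R/I^n)=\HH^0_\fm(I^{(n)}/I^n)=I^{(n)}/I^n$ inside $R/I^n$, which is the assertion. (One may bypass cohomology entirely: a minimal primary decomposition $I^n=\bigcap_{\fp\in\operatorname{Min}(I)}\fq_\fp\cap\fq_\fm$, with the last term present iff $\fm$ is embedded, has isolated components $\fq_\fp=I^nR_\fp\cap R$, so $I^{(n)}=\bigcap_\fp\fq_\fp$; and this equals $\bigcup_j(I^n:\fm^j)=(I^n)^{\sat}$ since saturating against $\fm$ deletes precisely the $\fm$-primary component.) The only genuine content is the associated-primes computation of the second paragraph, and it is exactly there that the dimension hypothesis is used, to force every embedded prime of $I^n$---and every prime containing $I$ other than those of dimension one---to be $\fm$. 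I expect no real obstacle beyond this; the one subtlety worth flagging is that $\fm$ must not be a minimal prime of $I$, since otherwise $I^{(n)}=I^n$ while $\HH^0_\fm(R/I^n)\neq 0$, and the statement would be false.
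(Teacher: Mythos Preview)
Your argument is correct. The paper does not actually prove this fact: it is stated as a well-known observation, preceded only by the identity $\HH^0_{\fm}(R/I^n)=(I^n)^{\sat}/I^n$. Your parenthetical route---take a primary decomposition of $I^n$, observe that the only possible embedded prime is $\fm$, so saturating at $\fm$ leaves exactly $I^{(n)}$---is precisely the argument that identity invites, and it is the same reasoning the paper sketches for the analogous Fact~\ref{locc} in the locally complete intersection case. The cohomological version via the short exact sequence $0\to I^{(n)}/I^n\to R/I^n\to R/I^{(n)}\to 0$ is a mild repackaging of the same content.

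One small remark: the caveat you flag at the end is automatically satisfied. In a local ring, if $\fm$ were a minimal prime of $I$ then every prime containing $I$ would equal $\fm$ (any such prime lies below $\fm$), forcing $\dim R/I=0$; so the hypothesis $\dim R/I=1$ already excludes $\fm\in\operatorname{Min}(I)$ and there is nothing extra to assume.
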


In the case $I^{(n)}/ I^n=\HH^0_{\fm}(R/I^n)$, the equality of symbolic powers and ordinary powers translates to the positivity of the
depth function $f(n):=\depth(R/I^n)$.

\begin{proposition}\label{thm}
Let $(R,\fm)$ be a regular local ring
of dimension $d$ and $\fp$ be a prime ideal of dimension one
generated by at most $d$ elements. The following holds:
  \begin{enumerate}\item[a)]
Suppose $\fp^i=\fp^{(i)}$ for some $i>1$. Then
\\
\textmd{i)} $\fp^i=\fp^{(i)}$ for all $i$.\\
\textmd{ii)} $\fp$  generated by exactly $d-1$ elements.
\item[b)]
Suppose $\fp^{i_0}\neq\fp^{(i_0)}$ for some $i_0>1$. Then $\fp^i\neq\fp^{(i)}$ for all $i>i_0$.\end{enumerate}
\end{proposition}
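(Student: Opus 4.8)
The plan is to verify that $\fp$ satisfies the hypotheses of Lemma \ref{5} and then to read off a clean dichotomy from it. First I would record the numerical data. Since $R$ is regular, hence Cohen-Macaulay, one has $\Ht(\fp)+\dim(R/\fp)=\dim R$, so $\Ht(\fp)=d-1$, and $\grade(\fp,R)=\Ht(\fp)=d-1$. By Krull's height theorem $\mu(\fp)\geq\Ht(\fp)=d-1$, so the hypothesis $\mu(\fp)\leq d$ forces $\mu(\fp)\in\{d-1,d\}$: either $\fp$ is a complete intersection or it is an almost complete intersection. Next, $R/\fp$ is a one-dimensional Noetherian local domain, hence of depth one and Cohen-Macaulay, so by the Auslander-Buchsbaum formula $\pd(R/\fp)=d-\depth(R/\fp)=d-1=\grade(\fp,R)$; thus $\fp$ is perfect. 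Since $\mu(\fp)\leq d=\Ht(\fp)+1\leq\Ht(\fp)+2$ and $R$ is Gorenstein, Example \ref{av}(ii) shows that $\fp$ is strongly Cohen-Macaulay. Finally, $\V(\fp)=\{\fp,\fm\}$, because a prime strictly between $\fp$ and $\fm$ would force $\dim(R/\fp)\geq 2$; at $\fq=\fp$ the ideal $\fp R_\fp$ is the maximal ideal of the $(d-1)$-dimensional regular local ring $R_\fp$, so $\mu(\fp_\fp)=d-1=\Ht(\fp)$, while at $\fq=\fm$ one has $\mu(\fp_\fm)=\mu(\fp)\leq d=\Ht(\fm)$; hence $\fp$ satisfies $\G_\infty$.

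With these verified I would invoke Lemma \ref{5} for $I=\fp$, with $D=\dim(R/\fp)=1$ and $c:=\mu(\fp)-\grade(\fp,R)\in\{0,1\}$, obtaining $\depth(R/\fp^{n+1})=D-c=1-c$ for all $n\geq c$. If $\mu(\fp)=d-1$ then $c=0$ and the lemma gives $\depth(R/\fp^{n})=1>0$ for every $n\geq 1$. If $\mu(\fp)=d$ then $c=1$ and the lemma gives $\depth(R/\fp^{n})=0$ for every $n\geq 2$. Combining with Fact \ref{tr} and the equivalence $\fp^{(n)}=\fp^n\iff\HH^0_{\fm}(R/\fp^n)=0\iff\depth(R/\fp^n)>0$, I obtain the dichotomy: either $\mu(\fp)=d-1$ and $\fp^n=\fp^{(n)}$ for all $n$, or $\mu(\fp)=d$ and $\fp^n\neq\fp^{(n)}$ for all $n\geq 2$.

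Part (a) then follows by excluding the second alternative: if $\fp^i=\fp^{(i)}$ for some $i>1$, the almost complete intersection case cannot occur, so $\mu(\fp)=d-1$, which is statement (ii), and then the first alternative gives (i). Part (b) follows by excluding the first alternative: if $\fp^{i_0}\neq\fp^{(i_0)}$ for some $i_0>1$, then $\mu(\fp)=d$, whence $\fp^i\neq\fp^{(i)}$ for all $i\geq 2$, in particular for all $i>i_0$; alternatively, (b) is a formal consequence of (a)(i) by contraposition.

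I do not expect a serious obstacle. The two points needing some care are the verification of the $\G_\infty$ condition, which hinges on $\V(\fp)$ consisting only of $\fp$ and $\fm$, and the index bookkeeping in Lemma \ref{5}: taking $c=1$ in the almost complete intersection case must really deliver $\depth(R/\fp^n)=0$ for \emph{all} $n\geq 2$, not merely for $n\gg 0$. As a sanity check, the case $n=2$ can be recovered independently from Lemma \ref{com}, since $\fp R_\fm=\fp$ needs $d>\Ht(\fp)$ generators and hence is not generated by a regular sequence.
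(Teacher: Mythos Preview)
Your argument is correct. For part a)(i) it coincides with the paper's proof: verify that $\fp$ is perfect, strongly Cohen--Macaulay, and $\G_\infty$, then combine Lemma~\ref{5} with Fact~\ref{tr}. Your index bookkeeping in Lemma~\ref{5} is accurate in both cases $c=0$ and $c=1$.

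Where you diverge is in a)(ii) and b). You set up a single dichotomy---either $\mu(\fp)=d-1$ and every power is symbolic, or $\mu(\fp)=d$ and no power beyond the first is---and read all three conclusions off of it. The paper instead proves a)(ii) by first obtaining $\fp^2=\fp^{(2)}$ from a)(i) and then invoking Lemma~\ref{com} at $\fq=\fm$; and for b) it brings in separate machinery, showing that a $d$-generated height-$(d-1)$ prime in a $d$-dimensional regular local ring is generated by a weak $d$-sequence, whence $\Ass(R/\fp)\subseteq\Ass(R/\fp^2)\subseteq\cdots$. Your route is more economical: it dispenses with both Lemma~\ref{com} and the weak $d$-sequence input, and your remark that b) is the contrapositive of a)(i) is worth emphasizing, since it shows the paper's separate argument for b) is in fact redundant once a)(i) is established. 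What the paper's approach to b) buys is a direct proof of the monotonicity of $\Ass(R/\fp^n)$, which is of independent interest but not needed for the statement at hand.
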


\begin{proof}
 a): As $\fp$ is almost complete intersection and of dimension one, one can easily check that $\fp$
is perfect, strongly Cohen-Macaulay (see Example \ref{av}(ii)) and $\G_\infty$.
\begin{enumerate}
\item[i)]
In view of
Lemma \ref{5}, $f(n):=\depth(R/\fp^n)$ is constant for all $n>1$.
Having Fact \ref{tr}
in mind, $f(i_0)\neq0$. So, $f(n)\neq0$ for all $n>1$.
As $f(n)\leq \dim R/\fp^n=1$, we get $f(n)=1$ for all $n>1$. Again by Fact \ref{tr},
$\fp^i=\fp^{(i)}$ for all $i$.

\item[ii)]
By part $i)$, $\fp^2=\fp^{(2)}$.
By Lemma \ref{com},
$\fp_{\fq}$ is generated by an $R_{\fq}$-sequence for all
$\fq\in \V(\fp)$ such that $\dim R_{\fq}/\fp_{\fq}\leq 1$. For example we can take $\fq:=\fm$. The proof is now complete.
\end{enumerate}

b): Every $d$-generated prime ideal of height $d-1$  in a regular local ring of dimension $d$
is generated by a weak d-sequence, see \cite[1.3]{dseq}.  In view of \cite[Corollary 2.5]{dseq}, for  an ideal generated by a  weak d-sequence over an
integral domain, one has
$$\Ass(R/\fp)\subseteq\Ass(R/\fp^2)\subseteq\ldots\subseteq\Ass(R/\fp^{i_0})\subseteq\ldots\quad(\ast)$$
As $\fp^{i_0}\neq\fp^{(i_0)}$ and by Fact \ref{tr}, $\depth(R/\fp^{i_0})=0$. Thus  $\fm\in\Ass(R/\fp^{i_0})$. Applying $(\ast)$, we deduce that
$\fm\in\Ass(R/\fp^{i})$ for all $i>i_0$. In view of Fact \ref{tr},
$\fp^i\neq\fp^{(i)}$ for all $i>i_0$.
\end{proof}

The following
result shows that the assumption $\mu(\fp)=d$ is really important.

\begin{lemma}\label{par}(See \cite{hu})
Let $(R,\fm)$ be a regular local ring
of dimension $d\geq 4$ and $\fp$ be a prime ideal of dimension one
which is not a complete intersection and is in the linkage class of  a complete intersection.\begin{enumerate}
\item[i)]If  $R/\fp$ is  Gorenstein, then $\fp^2=\fp^{(2)}$ and $\fp^i\neq\fp^{(i)}$ for all $i>2$.
\item[ii)] If $R/\fp$ is not Gorenstein, then $\fp^i\neq\fp^{(i)}$ for all $i\geq2$.
\end{enumerate}
\end{lemma}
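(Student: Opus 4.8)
This is a theorem of Huneke and Ulrich \cite{hu}; here I describe how I would organize the argument. The first move is to turn everything into a depth computation: by Fact \ref{tr}, since $\fp$ is radical with $\dim(R/\fp)=1$ we have $\fp^{(i)}/\fp^i\cong\HH^0_\fm(R/\fp^i)$, so $\fp^i=\fp^{(i)}$ holds exactly when $\depth(R/\fp^i)>0$, equivalently (as $\dim(R/\fp^i)=1$) when $R/\fp^i$ is Cohen--Macaulay; and since $\V(\fp)=\{\fp,\fm\}$ the only possible obstruction is $\fm\in\Ass(R/\fp^i)$. So the lemma asserts: $R/\fp^i$ is Cohen--Macaulay for $i\le 2$ and for no larger $i$ when $R/\fp$ is Gorenstein, and only for $i\le 1$ when $R/\fp$ is not Gorenstein. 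Note also that $R/\fp$, being a one-dimensional domain, is Cohen--Macaulay, so $\fp$ is perfect of grade $d-1$.

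Next I would set up the linkage framework together with two reductions; since $\fp$ is licci, it is strongly Cohen--Macaulay and syzygetic. First, $\fm\in\Ass(R/\fp^{n})$ for some $n$: this holds because $\fp$ is not a complete intersection, via Burch's inequality once one knows the analytic spread $\ell(\fp)$ is maximal (part of the content of \cite{hu}). Second, $\fm\in\Ass(R/\fp^n)$ forces $\fm\in\Ass(R/\fp^{n+1})$: when $\mu(\fp)\le d$ this is Proposition \ref{thm}(b), and in general one follows the powers $\fp^n$ through a chain of links. Together these say that the indices with $\fp^n\neq\fp^{(n)}$ form a nonempty final segment of $\NN$, so it remains to find the smallest such $n_0$ and to prove $n_0=3$ if $R/\fp$ is Gorenstein and $n_0=2$ otherwise.

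If $\mu(\fp)\le d$, then $\fp$ is an almost complete intersection, hence $R/\fp$ is not Gorenstein by Kunz's theorem and we are in case (ii); Lemma \ref{com} applied with $\fq=\fm$ --- where $\fp R_\fm$ is not generated by a regular sequence, $\fp$ not being a complete intersection --- gives $\fp^2\neq\fp^{(2)}$, so $n_0=2$ and (ii) follows. The substantive case is $\mu(\fp)\ge d+1$, which is forced when $R/\fp$ is Gorenstein. Here I would analyse $\HH^0_\fm(R/\fp^2)\cong\HH^0_\fm(\fp/\fp^2)$ (the isomorphism because $\depth(R/\fp)>0$) through the conormal module $\fp/\fp^2$: the syzygetic property describes the obstruction by a Koszul homology module of $\fp$, and strong Cohen--Macaulayness, through the duality among the Koszul homologies of $\fp$, makes that module torsion-free over $R/\fp$ when $\omega_{R/\fp}\cong R/\fp$, whereas if $R/\fp$ is not Gorenstein the twist by $\omega_{R/\fp}$ contributes a nonzero submodule supported at $\fm$. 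Thus $\fp^2=\fp^{(2)}$ in the Gorenstein case (so $n_0\ge 3$), while $\fp^2\neq\fp^{(2)}$ otherwise (so $n_0=2$, completing (ii)). For (i) one then runs the same analysis one power further, using the syzygetic identification $\fp^2/\fp^3\cong\Sym_2(\fp/\fp^2)$ and the structure of a shortest chain of links from $\fp$ to a complete intersection, to see the obstruction already appears in $R/\fp^3$; with the two reductions this yields $\fp^i\neq\fp^{(i)}$ for all $i\ge 3$.

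The main obstacle is exactly this last pair of computations --- extracting the \emph{sharp} threshold, namely that the Gorenstein hypothesis moves the first non-Cohen--Macaulay power from $2$ to precisely $3$ --- as opposed to merely showing that some power fails, which the asymptotic input already does. Pinning down $n_0$ requires control of how canonical modules and Koszul homology transform along a chain of links, and this is the technical core of \cite{hu}; the remaining ingredients (the reduction via Fact \ref{tr}, the asymptotic depth, the monotonicity of $\Ass(R/\fp^n)$, and the almost-complete-intersection subcase via Lemma \ref{com}) are comparatively routine.
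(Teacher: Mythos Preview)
The paper does not supply its own proof of this lemma: it is stated with the attribution ``(See \cite{hu})'' and is immediately followed by the paragraph introducing Corollary~\ref{12}, with no intervening \texttt{proof} environment. There is therefore nothing in the paper to compare your sketch against; the author is simply quoting the Huneke--Ulrich result.

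Viewed on its own merits, your outline is a reasonable roadmap of the Huneke--Ulrich argument. The reduction to $\depth(R/\fp^i)>0$ via Fact~\ref{tr} is correct, the split into $\mu(\fp)\le d$ versus $\mu(\fp)\ge d+1$ is natural, and invoking Kunz's theorem to place the almost-complete-intersection subcase inside~(ii) is right. Two places are soft, and you flag them yourself. First, the monotonicity ``$\fm\in\Ass(R/\fp^n)\Rightarrow\fm\in\Ass(R/\fp^{n+1})$'' when $\mu(\fp)\ge d+1$ is only asserted (``follow the powers through a chain of links''); this is exactly where the licci hypothesis earns its keep via the depth formulas for powers of licci ideals, and it needs a precise argument or citation rather than a gesture. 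Second, the sharp threshold---that the first failure is at $n_0=3$ in the Gorenstein case and $n_0=2$ otherwise when $\mu(\fp)\ge d+1$---is, as you say, the technical heart of \cite{hu}; your conormal/Koszul-duality heuristic points the right way but is not a proof. One small caution: once $\mu(\fp)\ge d+1$ the condition $G_\infty$ fails at $\fm$, so any step that tacitly uses $G_\infty$ (for instance, identifying symmetric and ordinary powers beyond degree two) must be justified separately.
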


It was asked in \cite{e}  when is the natural map
$\Ext^*_R(R/\fa,R)\lo\HH^*_{\fa}(R)$ is  injective.
\begin{corollary}\label{12}
Let $\fp$ be as Lemma \ref{par}. Then, the natural map
$\Ext^d_R(R/\fp^n,R)\to \HH^{d}_{\fp} (R)$ is not injective  for all $n>2$.
\end{corollary}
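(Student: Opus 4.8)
The plan is to combine local duality with the Hartshorne--Lichtenbaum vanishing theorem; once the two ingredients are in place the statement is immediate. First, $\fp$ is as in Lemma \ref{par}, so for every $n>2$ one has $\fp^n\neq\fp^{(n)}$ --- in the Gorenstein case of Lemma \ref{par} only $\fp^2=\fp^{(2)}$ is forced, while for $i>2$ (resp.\ for all $i\geq 2$ in the non-Gorenstein case) the equality fails, so the hypothesis $n>2$ uniformly guarantees $\fp^n\neq\fp^{(n)}$. Since $\fp$ is a radical ideal of dimension one, Fact \ref{tr} gives $\HH^0_{\fm}(R/\fp^n)=\fp^{(n)}/\fp^n\neq 0$, equivalently $\depth(R/\fp^n)=0$, i.e.\ $\fm\in\Ass(R/\fp^n)$.

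Next I would check that the source of the natural map is nonzero and the target is zero. For the source: $R$ is regular local of dimension $d$, so by Auslander--Buchsbaum $\pd_R(R/\fp^n)=d-\depth(R/\fp^n)=d$, and the top $\Ext$ into $R$ of a module of projective dimension $d$ is nonzero (in a minimal free resolution $0\to F_d\to\cdots$ the differential $F_d\to F_{d-1}$ has entries in $\fm$, so the cokernel of its dual is nonzero by Nakayama); equivalently, by local duality over the Gorenstein ring $R$, $\Ext^d_R(R/\fp^n,R)$ is the Matlis dual of $\HH^0_{\fm}(R/\fp^n)\neq 0$. For the target: $\HH^d_{\fp}(R)=\HH^d_{\fp^n}(R)$, and since $R$ is a regular local domain its completion $\widehat R$ is a complete regular local domain with $\dim(\widehat R/\fp\widehat R)=\dim(R/\fp)=1>0$; the Hartshorne--Lichtenbaum vanishing theorem gives $\HH^d_{\fp\widehat R}(\widehat R)=0$, and faithful flatness of $R\to\widehat R$ yields $\HH^d_{\fp}(R)=0$. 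Therefore the natural map $\Ext^d_R(R/\fp^n,R)\to\HH^d_{\fp}(R)=0$ has nonzero source and zero target, hence is not injective, for every $n>2$.

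There is essentially no obstacle here; the only points requiring a little care are (i) invoking Hartshorne--Lichtenbaum after passing to $\widehat R$, since $R$ itself need not be complete --- note that we deliberately do \emph{not} claim the stronger $\ara(\fp)\leq d-1$, which for curves is not known in general; and (ii) checking that Lemma \ref{par} indeed forces $\fp^n\neq\fp^{(n)}$ for all $n>2$ in both the Gorenstein and the non-Gorenstein branch, which is exactly what pins down the range $n>2$ in the statement.
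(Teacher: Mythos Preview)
Your proof is correct and follows essentially the same approach as the paper: both arguments show the target vanishes via Hartshorne--Lichtenbaum (the paper phrases this as ``regular local rings are analytically irreducible'', which is exactly your passage to $\widehat R$), and both show the source is nonzero via local duality identifying $\Ext^d_R(R/\fp^n,R)^{\vee}$ with $\HH^0_{\fm}(R/\fp^n)=\fp^{(n)}/\fp^n\neq 0$. Your additional Auslander--Buchsbaum alternative for the nonvanishing of the source is a harmless embellishment.
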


\begin{proof}
 We are going to apply  Hartshorne-Lichtenbaum vanishing theorem. As regular local rings
are analytically irreducible and $\dim R/ \fp=1$ we get that $\HH^d_{\fp}(R)=0$. Thus, we need to show
$\Ext^d_R(R/\fp^n,R)\neq 0$.
Note that
$\Ext^d_R(R/\fp^n,R)\neq 0$ if its Matlis dual $\Ext^d_R(R/\fp^n,R)^v$
is nonzero. By \textit{local duality} and Fact \ref{tr}, we get that $\Ext^d_R(R/\fp^n,R)^v\simeq \HH^0_{\fm}(R/\fp^n)=\fp^{(n)}/\fp^n,$
which is nonzero.
\end{proof}

\section{Towards the rigidity in codimension two}
The  following well-known example (see e.g. \cite{tra}) illustrates our's idea:

\begin{example}
Let $R:=\mathbb{C}[x_0,\ldots,x_3]$ and let $\fp:=I_2(A)$ where \begin{equation*}
A:= \left(
\begin{array}{cccc}
x_0 & x_1 & x_2  \\
x_1 & x_2   & x_3 \\
\end{array} \right).
\end{equation*}
Then $\fp$ is  perfect of height two in a four-dimensional ring with three generators. Also, $\fp^n= \fp^{(n)}$ for all $n$.
\end{example}

An ideal  $I$ is called \textit{generically complete intersection} if $I_{\fp}$ is complete intersection for all $\fp\in \min(I)$.
Also, recall that $I$ is called \textit{locally complete intersection} if $I_{\fp}$ is complete intersection for all $\fp\in \V(I)\setminus\{\fm\}$.
In the graded situation we assume in addition that $\fp$ is homogeneous.  Our interest on locally complete intersection comes from the following
   non-linear version of \cite[Remark 2.2]{9}:

\begin{fact} \label{locc}
Let $I$ be a locally complete intersection ideal. Then $I^{(n)}/ I^n=\HH^0_{\fm}(R/I^n)$.
\end{fact}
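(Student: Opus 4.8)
The plan is to reduce the assertion to the single identity $I^{(n)}=(I^n)^{\sat}$, where $(I^n)^{\sat}=\bigcup_j(I^n:\fm^j)$. This suffices, because the formula recalled in Subsection~2.B gives $\HH^0_{\fm}(R/I^n)=(I^n)^{\sat}/I^n$; so everything comes down to comparing the symbolic power and the $\fm$-saturation at the level of primary decompositions.

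First I would localize. For a prime $\fq\in\V(I)$ with $\fq\neq\fm$ the hypothesis says that $I_{\fq}$ is generated by an $R_{\fq}$-regular sequence, and for such an ideal $J\lhd R_{\fq}$ all the quotients $R_{\fq}/J^n$ are Cohen-Macaulay: the associated graded ring $\gr_{J}(R_{\fq})$ is a polynomial ring over $R_{\fq}/J$, so each $J^n/J^{n+1}$ is a free $R_{\fq}/J$-module, and an induction on $n$ using the exact sequences $0\to J^n/J^{n+1}\to R_{\fq}/J^{n+1}\to R_{\fq}/J^n\to 0$ together with the depth lemma gives $\depth(R_{\fq}/J^n)=\dim(R_{\fq}/J^n)$. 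Hence $(R/I^n)_{\fq}$ is Cohen-Macaulay, and in particular it has no embedded primes.

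Next I would deduce the key containment $\Ass(R/I^n)\subseteq\min(I)\cup\{\fm\}$: if $\fq\in\Ass(R/I^n)$ and $\fq\neq\fm$, then $\fq\supseteq I$, so by the previous step $\fq R_{\fq}\in\Ass((R/I^n)_{\fq})$ is a minimal prime of $I_{\fq}$, and since it is the maximal ideal of $R_{\fq}$ this forces $\fq\in\min(I)$. Now fix an irredundant primary decomposition $I^n=Q_{\fm}\cap\bigcap_{\fp\in\min(I)}Q_{\fp}$, with $Q_{\fp}$ being $\fp$-primary and the convention $Q_{\fm}=R$ if $\fm\notin\Ass(R/I^n)$. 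Localizing at a minimal prime $\fp$ of $I$ kills every other primary component, since no other member of $\Ass(R/I^n)$ is contained in $\fp$; thus $I^nR_{\fp}\cap R=Q_{\fp}$, and intersecting over $\fp\in\Ass(I)=\min(I)$ yields $I^{(n)}=\bigcap_{\fp\in\min(I)}Q_{\fp}$. On the other hand, saturating $I^n$ by $\fm$ removes exactly the component $Q_{\fm}$ --- a one-line argument from the definition of a primary ideal, since $\fm^j\not\subseteq\sqrt{Q_{\fp}}$ for $\fp\neq\fm$ and $\fm^j\subseteq Q_{\fm}$ for $j\gg 0$ --- so $(I^n)^{\sat}=\bigcap_{\fp\in\min(I)}Q_{\fp}$ as well. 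Comparing the two expressions gives $I^{(n)}=(I^n)^{\sat}$, hence $I^{(n)}/I^n=\HH^0_{\fm}(R/I^n)$.

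The only input that is not pure bookkeeping with primary decompositions is the Cohen-Macaulayness of $R/J^n$ for $J$ a complete intersection, which is classical, so I do not expect a genuine obstacle. The one point to watch is the degenerate case $\dim(R/I)=0$ (for instance $I=\fm$): there $\Ass(I)=\{\fm\}$ makes $I^{(n)}=I^n$, while $\HH^0_{\fm}(R/I^n)$ need not vanish; one must therefore take $\fm\notin\Ass(I)$, equivalently $\Ass(I)=\min(I)$, as part of the standing hypotheses --- which is automatic in every situation where this Fact will be used, where $R/I$ is Cohen-Macaulay of positive dimension.
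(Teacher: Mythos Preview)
Your proof is correct and follows essentially the same route as the paper's sketch: both reduce to $I^{(n)}=(I^n)^{\sat}$, and both obtain this by localizing at $\fq\in\V(I)\setminus\{\fm\}$ and using that a complete-intersection ideal has $J^n=J^{(n)}$ (the paper states this directly and invokes compatibility of symbolic powers with localization, while you unpack it via the Cohen--Macaulayness of $R_{\fq}/I_{\fq}^n$ and a primary-decomposition argument --- these are the same idea at different levels of detail). Your caveat about the degenerate case $\fm\in\Ass(I)$ is well taken; it is indeed an implicit standing hypothesis in the paper's applications.
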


\textmd{Sketch of proof:} Recall that symbolic powers and ordinary  powers are the same if the ideal is complete-intersection.
Also, localization behaves nicely with respect to symbolic power (see Lemma \ref{st} below).  It turns out that $I^{(n)}=(I^n)^{\sat}$.
It remains to mention  that $\HH^0_{\fm}(R/I^n)=\frac{(I^n)^{\sat}}{I^n}$.   \ \  \ \  $\Box$

\begin{lemma}\label{two}
Let $I$ be a Cohen-Macaulay height two  ideal generated by $d$ elements in  a regular local ring $R$ of dimension $d$.  If $I$
is locally complete intersection, then
$\depth (R/I^n)=\depth  (R/I^d)$ for all $\forall n\geq d-1$.
\end{lemma}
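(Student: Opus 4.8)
The plan is to reduce the statement to Lemma \ref{5} by verifying that $I$ satisfies the hypotheses of that lemma. First I would observe that since $I$ is a Cohen-Macaulay (equivalently perfect, as $R$ is regular) ideal of height two generated by $d=\dim R$ elements, we have $\mu(I)\leq \Ht(I)+2$; so by Example \ref{av}(ii) the ideal $I$ is strongly Cohen-Macaulay, and being perfect of codimension two it is automatically strongly Cohen-Macaulay by Example \ref{av}(i) as well. Next I would check the $\G_\infty$ condition: for $\fp\in\V(I)$ with $\fp\neq\fm$ the localization $I_\fp$ is a complete intersection of height two (local complete intersection hypothesis), so $\mu(I_\fp)=2\leq\Ht(\fp)$; and for $\fp=\fm$ one has $\mu(I_\fm)=\mu(I)=d=\dim R=\Ht(\fm)$. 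Hence $\G_\infty$ holds.

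With these verified, Lemma \ref{5} applies with $D=\dim(R/I)=d-2$ and $d_{\mathrm{Lemma}}:=\mu(I)-\grade(I,R)=d-2$, giving $\depth(R/I^{n+1})=D-d_{\mathrm{Lemma}}=(d-2)-(d-2)=0$ for all $n\geq d-2$, i.e. $\depth(R/I^n)=0$ for all $n\geq d-1$. In particular $\depth(R/I^n)=0=\depth(R/I^d)$ for every $n\geq d-1$, which is exactly the claimed equality. So the depth function is not merely eventually constant but eventually zero, and the stable value is reached at $n=d-1$.

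I expect the only genuine point requiring care is the bookkeeping with the two "$d$"s: the dimension of $R$ and the invariant $\mu(I)-\grade(I,R)$ appearing in Lemma \ref{5}. Because $\mu(I)=d$ and $\grade(I,R)=\Ht(I)=2$, these coincide up to the constant shift that makes $D-(\mu(I)-\grade(I,R))=0$, and it is precisely this numerical coincidence that forces the stable depth to vanish and pins down the threshold as $d-1$. A secondary subtlety is confirming that $I$ being a height-two perfect ideal in a regular local ring guarantees $R/I$ is Cohen-Macaulay of dimension $d-2$ so that the local cohomology / depth computations are unambiguous; this is standard (Auslander–Buchsbaum plus the Cohen-Macaulay hypothesis on $R/I$), and I would simply cite it rather than reprove it.
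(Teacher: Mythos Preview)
Your proof is correct and follows exactly the paper's route: verify that $I$ is perfect, strongly Cohen--Macaulay (via Example \ref{av}), and $\G_\infty$, then invoke Lemma \ref{5}; your explicit computation that the stable depth equals zero is extra detail the paper omits but is consistent with its later use in Theorem \ref{f}. One harmless slip: the inequality $\mu(I)\le\Ht(I)+2$ you assert for Example \ref{av}(ii) fails once $d>4$, but your simultaneous appeal to Example \ref{av}(i)---which covers all perfect codimension-two ideals in a regular ring regardless of $\mu(I)$---already suffices, so the argument stands.
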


\begin{proof}
It is easy to see that such an ideal is perfect and $\G_\infty$. Due to Example \ref{av}, $I$ is strongly Cohen-Macaulay. It remains to apply Lemma \ref{5}.
\end{proof}

The same argument suggests:

\begin{corollary}\label{twoc}
Let $\fp$ be a Cohen-Macaulay height two prime ideal generated by $3$ elements in  a regular local ring $R$ of dimension $4$.  Then
$\depth (R/\fp^n)=\depth  (R/\fp^4)$ for all $n\geq 2$.
\end{corollary}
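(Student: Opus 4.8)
The plan is to run exactly the argument behind Lemma \ref{two}, the only new ingredient being a sharper value of the numerical invariant that governs the depth stability in Lemma \ref{5}. First I would record that $\fp$ is perfect: since $R$ is regular and $R/\fp$ is Cohen--Macaulay, the Auslander--Buchsbaum formula gives
$\pd_R(R/\fp)=\depth R-\depth(R/\fp)=\dim R-\dim(R/\fp)=\Ht(\fp)=\grade(\fp,R)$,
so $\fp$ is a perfect ideal of height (equivalently, codimension) two in a regular ring; by Example \ref{av}(i) it is therefore strongly Cohen--Macaulay.

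Next I would verify the $\G_\infty$ condition for $\fp$, i.e.\ that $\mu(\fp_\fq)\le \Ht(\fq)$ for every $\fq\in\V(\fp)$. If $\fq=\fm$ this reads $3\le 4$. If $\Ht(\fq)=3$, then $\fp_\fq$ is generated by the images of the three generators of $\fp$, so $\mu(\fp_\fq)\le \mu(\fp)=3=\Ht(\fq)$. Finally, a height-two prime lying in $\V(\fp)$ must coincide with $\fp$ (it contains $\fp$ and has the same height), and then $\mu(\fp_\fp)=\mu(\fp R_\fp)=2=\Ht(\fp)$ because $\fp R_\fp$ is the maximal ideal of the two-dimensional regular local ring $R_\fp$. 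Hence $\fp$ is $\G_\infty$.

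Now I would apply Lemma \ref{5} to $R$ (a local Gorenstein ring, being regular) and to $I:=\fp$, which by the above is perfect, strongly Cohen--Macaulay and $\G_\infty$. Here $D:=\dim(R/\fp)=4-2=2$ and, crucially, the relevant parameter is $d:=\mu(\fp)-\grade(\fp,R)=3-2=1$; this is precisely where the hypothesis that $\fp$ is generated by \emph{three} elements improves on the generic value $d-2=2$ arising from a $d$-generated ideal in Lemma \ref{two}. Lemma \ref{5} then gives $\depth(R/\fp^{\,n+1})=D-d=1$ for all $n\ge 1$, i.e.\ $\depth(R/\fp^{\,n})=1$ for all $n\ge 2$. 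In particular $\depth(R/\fp^{\,n})=\depth(R/\fp^{\,4})$ for every $n\ge 2$, which is the claim.

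I do not expect a genuine obstacle here: the proof is a direct application of Lemma \ref{5}, and all the substance is in checking its hypotheses. The one point deserving a moment's care is the verification of $\G_\infty$, since that condition must be tested at $\fm$ as well as at the (at most one-parameter family of) primes strictly between $\fp$ and $\fm$; everything else is routine.
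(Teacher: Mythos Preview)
Your proof is correct and follows exactly the approach the paper intends: the paper offers no separate argument for this corollary beyond ``the same argument suggests,'' and you have faithfully carried out that argument by verifying perfectness, strong Cohen--Macaulayness (via Example~\ref{av}), and $\G_\infty$, then invoking Lemma~\ref{5} with $d=\mu(\fp)-\grade(\fp,R)=1$. You also correctly pinpoint why the locally-complete-intersection hypothesis of Lemma~\ref{two} is no longer needed here: with only three generators, the inequality $\mu(\fp_\fq)\le\Ht(\fq)$ holds automatically at every $\fq\in\V(\fp)$.
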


For each $X\subseteq \Spec(R)$ and any $\ell$, set $X^{\geq\ell}:=\{\fp\in X: \Ht(\fp)\geq \ell\}$.

\begin{lemma}\label{twoc1}
Let $I$ be a Cohen-Macaulay height two ideal locally complete intersection in  a regular local ring $R$ of dimension  $d=\dim R>2$.  Then $I^i=I^{(i)}$  for all $i>1$ if and only if $\mu(I)\leq d-1.$
\end{lemma}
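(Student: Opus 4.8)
The plan is to translate the statement into one about the positivity of the depth function and then feed it into the tools already at hand. Since $R$ is regular and $I$ is Cohen--Macaulay of height two, $I$ is perfect of grade two, hence strongly Cohen--Macaulay by Example~\ref{av}(i); and since $I$ is locally complete intersection, $\mu(I_\fp)=\Ht(I_\fp)=2\le\Ht(\fp)$ for every $\fp\in\V(I)\setminus\{\fm\}$. By Fact~\ref{locc}, $I^{(n)}/I^{n}=\HH^{0}_{\fm}(R/I^{n})$ for all $n$, so $I^{n}=I^{(n)}$ is equivalent to $\depth(R/I^{n})>0$; since $I=I^{(1)}$ by unmixedness, the lemma amounts to: $\depth(R/I^{n})>0$ for all $n>1$ if and only if $\mu(I)\le d-1$.

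For the direction ``$\mu(I)\le d-1\Rightarrow$ equality'', I would note that then $\mu(I)\le d=\Ht(\fm)$, so $I$ satisfies the $\G_\infty$ condition; being perfect and strongly Cohen--Macaulay, $\gr_{I}(R)$ is then Cohen--Macaulay (by the results of Herzog--Simis--Vasconcelos and Huneke). By Lemma~\ref{5} (here $\dim(R/I)=d-2$ and $\mu(I)-\grade(I,R)=\mu(I)-2$) one has $\depth(R/I^{n})=d-\mu(I)$ for all $n\ge\mu(I)-1$, and since $\gr_{I}(R)$ is Cohen--Macaulay the depth function does not drop below this eventual value, so $\depth(R/I^{n})\ge d-\mu(I)\ge 1$ for every $n\ge1$. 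Hence $I^{n}=I^{(n)}$ for all $n$, in particular for $n>1$.

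For the converse I would argue by contraposition: assume $\mu(I)\ge d$. If $\mu(I)=d$, then $I$ still satisfies $\G_\infty$, so Lemma~\ref{5} (equivalently Lemma~\ref{two} together with a computation of $\depth(R/I^{d})$) gives $\depth(R/I^{n})=d-\mu(I)=0$ for all $n\ge d-1$; in particular $\depth(R/I^{d-1})=0$, so $I^{d-1}\ne I^{(d-1)}$ with $d-1>1$ because $d>2$, and the equality for all $i>1$ fails. If instead $\mu(I)>d$, then $\G_\infty$ fails at $\fm$ and Lemma~\ref{5} is unavailable; here the plan would be to show directly that $\ell(I)=d$ (via a dimension count on the fiber cone $\gr_{I}(R)\otimes_{R}k$, exploiting that $I$ fails $\G_\infty$ only at $\fm$), after which Burch's inequality gives $\min_{n}\depth(R/I^{n})\le d-\ell(I)=0$, so $\depth(R/I^{n})=0$ for some $n$, necessarily $n\ge 2$ since $\depth(R/I)=d-2>0$; again $I^{n}\ne I^{(n)}$ for some $n>1$.

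The steps with $\mu(I)\le d$ are routine given what is already proved: Fact~\ref{locc} for the reduction to depth, Lemma~\ref{5} for the large powers, and the Cohen--Macaulayness of $\gr_{I}(R)$ for the small ones. The step I expect to be the genuine obstacle is the case $\mu(I)>d$ in the ``only if'' direction: proving that a Cohen--Macaulay height-two locally complete intersection ideal requiring more than $d$ generators necessarily has maximal analytic spread $\ell(I)=d$ (equivalently, that its symbolic and ordinary powers must eventually differ), since there all of the $\G_\infty$/linear-type technology collapses.
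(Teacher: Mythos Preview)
Your backward direction ($\mu(I)\le d-1\Rightarrow I^{i}=I^{(i)}$ for all $i$) is essentially correct but takes a longer road than the paper. You reduce to depth via Fact~\ref{locc}, then invoke strongly Cohen--Macaulay plus $\G_\infty$ to control $\depth(R/I^{n})$. The step ``since $\gr_I(R)$ is Cohen--Macaulay the depth function does not drop below the eventual value'' is not quite self-evident and would need a precise reference (e.g.\ the explicit depth formula coming from acyclicity of the approximation complex in the strongly CM $+$ $\G_\infty$ setting, not merely CM-ness of $\gr_I(R)$). The paper bypasses all of this: from $\mu(I)\le d-1$ and the locally CI hypothesis it verifies the sharper condition $\mu(I_\fq)\le\Ht(\fq)-1$ for every $\fq\in\V(I)^{\ge 3}$, and then quotes \cite[Lemma~3.1]{mor} to conclude $I^{i}=I^{(i)}$ for all $i$ in one stroke.

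Your forward direction has a genuine gap, exactly where you flag it. The case $\mu(I)=d$ is fine via Lemma~\ref{5}, but for $\mu(I)>d$ you lose $\G_\infty$ and propose to recover $\ell(I)=d$ ``via a dimension count on the fiber cone'' and then apply Burch. That analytic-spread computation is not routine once the $\G_\infty$ machinery collapses, and you have not supplied it; as written the argument is incomplete. The paper avoids the case split entirely: assuming $I^{i}=I^{(i)}$ for all $i>1$, it observes that $I$ is perfect and generically complete intersection, then applies \cite[Corollary~3.4]{mor} (whose proof requires a prime of height $\ge 3$, available precisely because $d>2$) to obtain $\mu(I)\le d-1$ directly, with no hypothesis or case distinction on $\mu(I)$. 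So the missing ingredient in your approach is exactly what Morey's result packages: a uniform bound on $\mu(I)$ from equality of all symbolic and ordinary powers, valid without assuming $\G_\infty$ at the maximal ideal.
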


\begin{proof}Suppose first that
$I^i=I^{(i)}$ for all $i>1$. The ideal $I$ is prefect, because $\pd(I)<\infty$. Clearly, a locally complete intersection ideal is generically complete intersection. In particular, we are in the situation of \cite[Corollary 3.4]{mor}. We note that
the proof of \cite[Corollary 3.4]{mor} uses the existence of a prime ideal of height $\geq 3$. Here, is the place that we use the assumption $d>2$. Now, \cite[Corollary 3.4]{mor} shows that $\mu(I)\leq d-1$.

Now, suppose $\mu(I)\leq d-1.$ Recall that a locally complete intersection ideal is generically complete intersection.  In view
 of Example \ref{av}, $I$ is strongly Cohen-Macaulay. Since $I$ is locally complete intersection, $\mu(IR_{\fq})=\grade(IR_{\fq},R_{\fq})=\Ht(IR_{\fq})=2\leq \Ht(\fq)-1$
for all $\fq\in \V(I)\setminus\{\fm\}$ with $3\leq\Ht(\fq)$. Also, $\mu(I)\leq d-1=\Ht(\fm)-1$. In particular, $\mu(I_{\fq})\leq \Ht(\fq)-1$  for all $\fq\in\V(I)^{\geq3}.$  By \cite[Lemma 3.1]{mor}, $I^i=I^{(i)}$ for all $i$.
\end{proof}

 The assumption $d>2$  is important:
Let $R:=k[[x,y]]$ and $I:=(x,y)$. Then  $I^n=I^{(n)}$ for all $n$. But, $\mu(I)>d-1$
(in the case of projective geometry there is no restriction on the dimension, because in the construction of $\PP^n$ we disregard the irrelevant ideal).

\begin{theorem}\label{f}
Let $I$ be a Cohen-Macaulay height two  ideal generated by exactly $d$ elements in  a regular local ring $R$ of dimension $d>2$.  Suppose $I$
is locally complete intersection. Then $I^i= I^{(i)}$  for all $i<d-1$ and $I^n\neq I^{(n)}\textit{ }\forall n\geq d-1.$
\end{theorem}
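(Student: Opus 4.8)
The strategy is to combine the depth-stabilization result of Lemma \ref{two} with the generator-count criterion of Lemma \ref{twoc1}, using the translation Fact \ref{locc} between the saturation defect $I^{(n)}/I^n$ and $\HH^0_{\fm}(R/I^n)$. First I would record that, by hypothesis, $I$ is Cohen-Macaulay of height two, locally complete intersection, and $\mu(I)=d$. Since $\mu(I)=d>d-1$, Lemma \ref{twoc1} applies \emph{in its contrapositive form}: it is not the case that $I^i=I^{(i)}$ for all $i>1$, so there exists some $i_0>1$ with $I^{i_0}\neq I^{(i_0)}$.

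Next I would upgrade this single failure to failure for all $n\geq d-1$ via the depth function. By Fact \ref{locc}, $I^{(n)}/I^n=\HH^0_{\fm}(R/I^n)$, so $I^n\neq I^{(n)}$ if and only if $\depth(R/I^n)=0$ (equivalently $\fm\in\Ass(R/I^n)$). Lemma \ref{two} tells us $\depth(R/I^n)=\depth(R/I^d)$ for all $n\geq d-1$; call this common value $\delta$. I must show $\delta=0$. The failure index $i_0$ produced above a priori could be smaller than $d-1$, so I cannot directly feed it into Lemma \ref{two}. To get around this I would use the associated-primes chain: $I$ is generated by $d$ elements in a regular local ring of dimension $d$ and has height two, but one also checks $I$ is generically complete intersection (being locally complete intersection), hence the results on associated primes of powers of such ideals (the chain $(\ast)$-type stabilization used in Proposition \ref{thm}b, valid here because $I$ is strongly Cohen-Macaulay, $\G_\infty$ and perfect) give $\Ass(R/I^{i_0})\subseteq\Ass(R/I^{n})$ for all $n\geq i_0$; in particular $\fm\in\Ass(R/I^n)$ for all $n\geq i_0$, so in particular for all $n\geq\max(i_0,d-1)$. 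Since by Lemma \ref{two} the depth is constant on $n\geq d-1$, and it is $0$ for at least one such $n$ (any $n\geq\max(i_0,d-1)$), it is $0$ for \emph{all} $n\geq d-1$; hence $I^n\neq I^{(n)}$ for all $n\geq d-1$ by Fact \ref{locc}.

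For the first assertion, $I^i=I^{(i)}$ when $i<d-1$, I would argue as follows. If $d=3$ the range $i<d-1$ is just $i=1$, where $I^1=I^{(1)}$ trivially. For $d\geq 4$ and $2\leq i<d-1$, I would invoke the depth estimates for powers of a strongly Cohen-Macaulay, $\G_\infty$, perfect height-two ideal: the relevant result (the Herzog-Simis-Vasconcelos-type bound, or the precise statement underlying Lemma \ref{5}) gives $\depth(R/I^i)\geq \dim(R/I)-?$; more concretely, for codimension-two perfect ideals one has $\depth(R/I^i)\geq d-i-1$ (this is where $i<d-1$ is used, forcing $\depth(R/I^i)\geq 1$, hence $\fm\notin\Ass(R/I^i)$, hence $I^i=(I^i)^{\sat}=I^{(i)}$ by Fact \ref{locc}). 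I would cite the appropriate lemma from \cite{9} or the strongly-Cohen-Macaulay literature for this depth bound rather than reprove it.

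\textbf{Main obstacle.} The delicate point is the lower-range claim $I^i=I^{(i)}$ for $i<d-1$: Lemma \ref{two} only controls depth for $n\geq d-1$, so for small $i$ I need an independent positivity statement $\depth(R/I^i)>0$, and pinning down exactly which published depth inequality for powers of strongly Cohen-Macaulay codimension-two perfect ideals delivers $\depth(R/I^i)\geq d-i-1$ (with the right hypotheses, no $\G_\infty$ gap, correct indexing) is the crux. The second obstacle is ensuring the associated-primes chain really does hold in this generality — Proposition \ref{thm}b used that $\fp$ was prime and generated by a weak $d$-sequence; here $I$ need not be prime, so I would instead route the monotonicity of $\Ass(R/I^n)$ through the strongly-Cohen-Macaulay/$\G_\infty$ structure, or simply observe that once $\delta=\depth(R/I^d)$ is computed to be $0$ directly (via Lemma \ref{5} with $n=d$, since $d\geq \mu(I)-\grade(I,R)=d-2$, giving $\depth(R/I^{d})=\dim(R/I)-(\mu(I)-\grade(I))=1-(d-2)$, which is $\leq 0$ exactly when $d\geq 3$), the whole $n\geq d-1$ range follows from Lemma \ref{two} without needing the associated-primes chain at all. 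Indeed this last computation — $\depth(R/I^{n})=1-(d-2)=3-d\leq 0$ for $d\geq 3$, forcing it to be $0$ — is the cleanest route and I would present it as the primary argument, using the $\Ass$-chain only as a remark.
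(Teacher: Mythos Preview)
Your high-range argument ($n\geq d-1$) is essentially correct and in fact cleaner than the paper's: once one verifies that $I$ is perfect, strongly Cohen--Macaulay and $\G_\infty$ (as in Lemma~\ref{two}), Lemma~\ref{5} applies directly and gives $\depth(R/I^{n+1})=\dim(R/I)-\bigl(\mu(I)-\grade(I,R)\bigr)$ for $n\geq d-2$. However, you miscompute $\dim(R/I)$: since $\Ht(I)=2$ in a $d$-dimensional ring, $\dim(R/I)=d-2$, not $1$ (you have carried over the codimension-one setting of Proposition~\ref{thm}). The correct value is $(d-2)-(d-2)=0$, so $\depth(R/I^m)=0$ for all $m\geq d-1$ and Fact~\ref{locc} finishes. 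The paper instead argues by contradiction, combining the low-range result with the depth stabilization $(\ast)$ and Lemma~\ref{twoc1}; your direct computation avoids that detour entirely.

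The genuine gap is the low-range claim $I^i=I^{(i)}$ for $i<d-1$. You propose to ``cite the appropriate lemma from \cite{9} or the strongly-Cohen-Macaulay literature,'' but \cite{9} covers only the graded polynomial situation (extending it to regular local rings is exactly the paper's contribution, as the introduction says), and you never name a local-ring source for the inequality $\depth(R/I^i)\geq d-i-1$. The paper earns this bound with real work: it invokes Tchernev's theorem to get $\Sym^i(I)\simeq I^i$ for $i\leq d$, builds the length-$i$ Weyman complex $F_i$ from the Hilbert--Burch resolution of $I$, shows $F_i$ has finite-length homology by checking that localization at any $\fp\neq\fm$ makes the relevant Koszul complex exact (an explicit complete-intersection computation using the locally-CI hypothesis), and then applies the new intersection theorem to conclude that $F_i$ is acyclic for $i<d$. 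Auslander--Buchsbaum then yields $\pd(R/I^i)\leq i+1$, hence $\depth(R/I^i)\geq d-i-1>0$. Your proposal omits this entire mechanism, and it is the heart of the theorem. (You are right to abandon the associated-primes chain: the weak $d$-sequence argument from Proposition~\ref{thm}(b) uses primeness and does not carry over here.)
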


We barrow some lines from \cite{9}.

\begin{proof}
Since $I$ is locally complete intersection and in view of  Fact \ref{locc}, we observe that $I^{(n)}/ I^n=\COH^0_{\fm}(R/I^n)$.
This means that $I^i=I^{(i)}$ if and only if
$\depth(R/I^i)>0$. It is easy to see that $I$ is $\G_\infty$ and strongly Cohen-Macaulay. Combining these along with Lemma \ref{two}, we observe $$I^i= I^{(i)}\textit{ }\exists i\geq d-1\Longleftrightarrow
I^i= I^{(i)}\textit{ }\forall i\geq d-1. \quad(\ast)$$

Since $I$ is perfect, $\pd(R/I)=\grade(I,R)=\Ht(I)=2$. So, $\pd(I)=1$, and recall that $\mu(I_{\fp})\leq\Ht(\fp)=\depth(R_{\fp})$,
 because $I$ is locally complete intersection assumption.
These assumptions imply that
$\Sym^i(I)\simeq I^i$ for all $i\leq d$ (see \cite[Theorem 5.1]{ty}).
Due to Hilbert-Burch, $0\to\R^{d-1}\stackrel{(a_{ij})}\lo R^d\to I\to 0$ is exact. In the light of  \cite{wey}, the  following (not necessarily exact) complex $$F_i:0\lo \bigwedge ^i(R^{d-1})\lo \bigwedge ^{i-1}(R^{d-1})\otimes R^d\lo \ldots \lo R^{d-1}\otimes \Sym^{i-1}(R^{d})\lo\Sym^i(R^d)$$ approximates $\Sym^i(I)\simeq I^i$.

Let $C:=\Ker \left(\Sym(R^{d})\to \Sym(I)\right)=\im\left(\Sym(R^{d-1})\to \Sym(R^{d})\right)$.  Note that $\mathcal{R}:=\Sym(R^d)=R[X_1,\ldots,X_d]$. Thus, $C=(\sum_ia_{ij}X_i:1\leq j\leq d-1)$ when we view
 $C$ as an ideal of $\mathcal{R}$.
 Denote the Koszul complex with respect to a generating set of $C$ in $\mathcal{R}$ by $\textsc{K}$. We should remark that $\mathcal{R}$ is a graded polynomial ring.
The $i$-th spot of $\textsc{K}$ is the above complex $F_i$. Let $\fp\in \V(I)\setminus\{\fm\}$. To prove $(F_i)_{\fp}$ is exact, we need to show
$\textsc{K}_{\fp[X_1,\ldots,X_d]}$ is exact.
Such a thing is the case if $C_{\fp[X_1,\ldots,X_d]}$ is complete intersection. Let us  check this.
There are  regular elements $f,g$ such that $I_{\fp}=(f,g)$. Then $0\to R_{\fp}\to R_{\fp}^2$ approximates $I_\fp$. Denote
the identity matrix by $\textbf{Id}$. Thus
$$0\to\R_{\fp}^{d-1}\stackrel{(a_{ij})}\lo R_{\fp}^d\to I_{\fp}\to 0\simeq(0\to R_{\fp}\stackrel{\left(^{+f}_{-g}\right)}\lo R_{\fp}^2\to I_{\fp}\to 0)\oplus(0\to\R_{\fp}^{d-2}\stackrel{\textbf{Id}}\lo\R_{\fp}^{d-2}\to 0\to 0).$$
This yields that $C_{\fp[X_1,\ldots,X_d]}=(fX_1-gX_2,X_3\ldots,X_d)$ which is complete intersection. In sum,
 we observed that $F_i$ have finite length homologies.

 Suppose $i<d$. Then, in view of the \textit{new intersection theorem} \cite{int},
 $F_i$ is acyclic.  Conclude from Auslander-Buchsbaum formula that $\depth(R/I^i)>0$ for all $i<d-1$.
 This implies that $$I^i= I^{(i)}\textit{ }\forall i<d-1.$$ Suppose on the contradiction that
$I^n= I^{(n)}$ for some $n\geq d-1.$
Via the last displayed item and $(\ast)$, we deduce  that $I^n= I^{(n)}$ for all $n$.
Then by Lemma \ref{twoc1}, $\mu(I)\leq d-1$ which is a contradiction.
\end{proof}

\begin{corollary}\label{f1}
Let $I$ be a Cohen-Macaulay height two  ideal generated by at least $d$ elements in  a regular local ring $R$ of dimension $d>2$.  Suppose $I$
is locally complete intersection. Then $I^i= I^{(i)}$  for all $i<d-1$ and $I^{d-1}\neq I^{(d-1)}$.
\end{corollary}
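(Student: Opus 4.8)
The plan is to reduce Corollary \ref{f1} to Theorem \ref{f} by a straightforward counting argument, rather than reproving anything from scratch. Suppose $I$ is a Cohen-Macaulay height-two locally complete intersection ideal in $R$ with $\mu(I)\geq d$. First I would observe that, since $I$ is perfect of grade two, the Hilbert-Burch theorem gives a minimal free resolution $0\to R^{m-1}\to R^{m}\to I\to 0$ where $m=\mu(I)$, and since $I$ is locally complete intersection, for every $\fp\in\V(I)\setminus\{\fm\}$ we have $\mu(I_\fp)=2\leq\Ht(\fp)$; combined with $\mu(I)\geq d=\Ht(\fm)$, this shows that $I$ satisfies $\G_\infty$ exactly when $\mu(I)=d$ — but more importantly the $\G_\infty$-type estimates only fail at $\fm$. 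In fact the cleanest route is: by Lemma \ref{twoc1}, $I^i=I^{(i)}$ for all $i>1$ would force $\mu(I)\leq d-1$, contradicting $\mu(I)\geq d$, so there is \emph{some} $i_0>1$ with $I^{i_0}\neq I^{(i_0)}$.

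Next I would pin down where the first failure occurs. The argument in the proof of Theorem \ref{f} that produces $I^i=I^{(i)}$ for all $i<d-1$ uses only that $I$ is perfect of grade two, locally complete intersection, with $\Sym^i(I)\simeq I^i$ for $i\leq d$ (via \cite[Theorem 5.1]{ty}, whose hypotheses $\pd(I)=1$ and $\mu(I_\fp)\leq\Ht(\fp)$ hold for \emph{any} such $I$, regardless of $\mu(I)$), and then the Weyman/Hilbert-Burch approximating complexes $F_i$ have finite-length homology (the local complete intersection computation of $C_{\fp[X_1,\dots,X_d]}$ goes through verbatim since it only used $I_\fp=(f,g)$) so the new intersection theorem makes them acyclic for $i<d$, whence $\depth(R/I^i)>0$ for $i<d-1$ by Auslander-Buchsbaum; with Fact \ref{locc} this gives $I^i=I^{(i)}$ for $i<d-1$. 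So the ``for all $i<d-1$'' half of the corollary is already contained in that proof without any change.

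For the remaining claim $I^{d-1}\neq I^{(d-1)}$, I would argue as follows. We have just shown $I^i=I^{(i)}$ for all $i<d-1$, so the first possible failure index is $i_0\geq d-1$. If $I^{d-1}=I^{(d-1)}$, then combining with the displayed equivalence $(\ast)$ of Theorem \ref{f} — which is available because $I$ is still $\G_\infty$ away from $\fm$ and strongly Cohen-Macaulay by Example \ref{av}(i), so Lemma \ref{two} still applies verbatim (it only needs $\mu(I)=d$; for $\mu(I)>d$ one uses the same Lemma \ref{5} with the larger value of $d:=\mu(I)-\grade(I,R)$, shifting the stable range but preserving the key equivalence ``equality for one large index $\iff$ for all large indices'') — we would get $I^n=I^{(n)}$ for all $n\geq d-1$, hence for all $n>1$, contradicting Lemma \ref{twoc1} since $\mu(I)\geq d>d-1$. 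I expect the one genuinely delicate point to be making sure the stable-range equivalence analogous to $(\ast)$ survives when $\mu(I)>d$: one must check that Lemma \ref{two}'s invocation of Lemma \ref{5} still yields a \emph{single} stabilized value of $\depth(R/I^n)$ for $n$ in the relevant range $\{d-1,d,\dots\}$ and that this range still overlaps the ``$i<d-1$'' region by at least one index so that the dichotomy ``$\depth>0$ everywhere or $\depth=0$ from some point on'' closes up; this is where I would be most careful, but it is a bookkeeping matter, not a new idea.
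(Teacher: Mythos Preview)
Your first half --- showing $I^i=I^{(i)}$ for $i<d-1$ --- is essentially the paper's argument, though you should be more careful with Tchernev's theorem: when $\mu(I)>d$ the inequality $\mu(I_\fp)\leq\Ht(\fp)$ \emph{fails} at $\fp=\fm$, so one only obtains $\Sym^i(I)\simeq I^i$ for $i\leq d-1$, not $i\leq d$ as you wrote. This is harmless for the conclusion you need in that half.

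The genuine gap is in the second half. You want to invoke the equivalence $(\ast)$ from Theorem~\ref{f}, which rests on Lemma~\ref{two}, which in turn rests on Lemma~\ref{5}. But Lemma~\ref{5} carries $\G_\infty$ as a hypothesis, and when $\mu(I)>d$ you have $\mu(I_\fm)=\mu(I)>d=\Ht(\fm)$, so $I$ is \emph{not} $\G_\infty$. You acknowledge this (``it only needs $\mu(I)=d$'') and then assert that for larger $\mu(I)$ one simply reapplies Lemma~\ref{5} with the shifted parameter $\mu(I)-\grade(I,R)$ --- but the lemma does not apply at all without $\G_\infty$, so there is no stable-depth statement available to shift. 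And even granting such a statement, the stable range would begin at $n\geq \mu(I)-1\geq d$, which misses the index $d-1$ entirely; the ``bookkeeping'' you anticipate cannot close that gap. This is not a minor technicality: it is precisely the obstacle that prevents Theorem~\ref{f}'s method from extending verbatim.

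The paper sidesteps the problem by replacing depth stabilization with \cite[Theorem~3.2]{mor}: for a perfect height-two generically-complete-intersection ideal, having $I^i=I^{(i)}$ for all $i\leq d-1$ already forces $I^i=I^{(i)}$ for all $i$, with no $\G_\infty$ assumption. Combined with the first half and the hypothesis $I^{d-1}=I^{(d-1)}$, this gives equality for all $i$, and then Lemma~\ref{twoc1} yields the contradiction $\mu(I)\leq d-1$ exactly as you intended. So your overall architecture (reduce to ``equality for all $i$'' and contradict Lemma~\ref{twoc1}) is right; the missing ingredient is Morey's theorem in place of the unavailable $(\ast)$.
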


\begin{proof}
Recall that $\mu(I_{\fp})\leq\Ht(\fp)=\depth(R_{\fp})$ for all $\fp\in\V(I)$ of height less than $d$ and that $\pd(I)=1$.
Therefore,
$\Sym^i(I)\simeq I^i$ for all $i\leq d-1$ (see \cite[Theorem 5.1]{ty}). This follows by above proof that $I^i= I^{(i)}$  for all $i<d-1$.
Suppose on the contradiction that $I^{d-1}= I^{(d-1)}$. Let us summarize things:
$I^i= I^{(i)}$  for all $i\leq d-1$, $I$ is perfect of height-two   and $I$
is generically  complete intersection. Under these assumptions   \cite[Theorem 3.2]{mor} implies that $I^i= I^{(i)}$  for all $i$.
In the light of Lemma \ref{twoc1} $\mu(I)\leq d-1.$ This is excluded by the assumption. This contradiction
implies that  $I^{d-1}\neq I^{(d-1)}$.
\end{proof}

The assumption $d>2$ is important:
Let $R:=k[[x,y]]$ and $I:=(x,y)$. Then $\mu(I)=2>d-1$.   Clearly, $I^2=I^{(2)}$.
The locally complete intersection assumption is important:

\begin{example}\label{loc}
 Look at $I:=(yzw, xzw, xyw ,xyz)$ as an ideal in $R:=k[x,y,z,w]$. Then
$I$ is a Cohen-Macaulay height two  ideal generated by exactly $4$ elements in  a $\ast$-local regular ring $R$ of dimension $4$.
It is easy to observe that $x^m yzw\in  I^{(2)}\setminus I^2$ for all large $m$. Thus
$I^{(2)}\neq I^{2}$.
\end{example}

The height-two assumption is important:

\begin{example}Set $R:=k[x_1,\ldots,x_5]$.
We look at the pentagon as a simplicial complex. Its Stanley-Reisner ring is  $R_\Delta:=k[\underline{x}]/I_\Delta:= R / (x_1x_3,x_1x_4,x_2x_4,x_2x_5,
x_3x_5) .$ Note that $I_\Delta$ is a height three ideal. So, $\dim(\Delta)=\dim R_\Delta-1=1$.
By the help of Macaulay 2, the projective resolution of $R_\Delta$ over $R$ is
$ 0\to R\to R^4  \to  R^5\to R \to R_\Delta\to 0$. Due to Auslander-Buchsbaum formula, $\depth_R(R_\Delta)=2=\dim (R_\Delta)$.
Thus $I_\Delta$ is perfect and of codimension $3$ generated minimally by $5$ elements. In view of  \cite[Proposition 1.11]{ter},
$I_\Delta$ is locally complete intersection. Thus,
for all $\fp\in \V(I)$, one has $\mu (I_{\fp})\leq \Ht(\fp)$. By definition, $I$ is $\G_\infty$.  Thanks to  Macaulay2,  $\pd(R/I_\Delta^3)=5$. By Auslander-Buchsbaum formula, $\depth(R/I_\Delta^3)=0$. In the light of Lemma \ref{5}, $\depth(R/I_\Delta^n)=0$ for all $n\geq 3$. Consequently, $I_\Delta^n\neq I_\Delta^{(n)}$ for all $n\geq 3$, because $I_\Delta$  is locally complete intersection (see Fact  \ref{locc}).
By \cite[Example 2.8]{ter},  $\pd(R/I_\Delta^2)=3$ and consequently $I_\Delta^2=I_\Delta^{(2)}$.
\end{example}

In fact, the above example suggests:

\begin{corollary}
Let $I$ be  a perfect ideal of height $d-2$ with minimally $d$ generators in a $d$-dimensional regular local ring $R$.
If $I$ is locally complete intersection, then $I^n\neq I^{(n)}$ for all $n\geq 3$.
\end{corollary}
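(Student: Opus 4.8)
The plan is to reduce this statement to the technology already assembled in the paper, treating the height-$(d-2)$ case exactly the way Lemma~\ref{5} was used in Example~2.8 and in Theorem~\ref{f}. First I would record the standard structural facts about $I$: since $I$ is perfect of height $d-2$ with $\mu(I)=d$, we have $\mu(I)=\Ht(I)+2$, so by Example~\ref{av}(ii) (applied over the regular, hence Gorenstein, ring $R$, with $R/I$ Cohen--Macaulay because $I$ is perfect) the ideal $I$ is strongly Cohen--Macaulay. Next, because $I$ is locally complete intersection, $\mu(I_{\fp})=\Ht(I_{\fp})=\Ht(\fp)$ whenever $\fp\neq\fm$ lies in $\V(I)$; and at $\fm$ we have $\mu(I)=d=\Ht(\fm)$. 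Hence $\mu(I_{\fp})\le\Ht(\fp)$ for all $\fp\in\V(I)$, i.e. $I$ satisfies the $\G_\infty$ condition. Thus $I$ is a perfect, strongly Cohen--Macaulay, $\G_\infty$ ideal, and the hypotheses of Lemma~\ref{5} are met with $\grade(I,R)=d-2$, $\mu(I)=d$, hence $d_0:=\mu(I)-\grade(I,R)=2$, and $D:=\dim(R/I)=2$.

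Next I would pin down the depth at the critical value $n=3$. By Lemma~\ref{5}, for all $n\ge d_0=2$ we get $\depth(R/I^{n+1})=D-d_0=2-2=0$; in particular $\depth(R/I^{n})=0$ for all $n\ge 3$. (This is precisely the phenomenon exhibited numerically in the pentagon Example~2.8, where $\pd(R/I_\Delta^3)=5$ gives $\depth=0$ and then Lemma~\ref{5} propagates it to all $n\ge 3$; here the same conclusion is forced structurally rather than by a Macaulay2 computation.) One subtlety to address: Lemma~\ref{5} as stated gives the constancy $\depth(R/I^{n+1})=D-d_0$ for $n\ge d_0$, and I should make sure the boundary indexing is handled so that the conclusion lands at $I^{n}$ for $n\ge 3$ rather than $n\ge d_0+1=3$ shifted — but these coincide, so no real gap arises.

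Finally I would translate the depth statement into the failure of the symbolic/ordinary power equality. Because $I$ is locally complete intersection, Fact~\ref{locc} applies and gives $I^{(n)}/I^n=\HH^0_{\fm}(R/I^n)$ for every $n$. Therefore $I^n=I^{(n)}$ if and only if $\HH^0_{\fm}(R/I^n)=0$, i.e. if and only if $\depth(R/I^n)>0$. By the previous paragraph $\depth(R/I^n)=0$ for all $n\ge 3$, so $\HH^0_{\fm}(R/I^n)\neq 0$, whence $I^n\neq I^{(n)}$ for all $n\ge 3$, as claimed.

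The main obstacle is not conceptual — all the heavy lifting is in Lemma~\ref{5} and Fact~\ref{locc} — but bookkeeping: one must verify carefully that the numerical hypotheses of Lemma~\ref{5} are genuinely satisfied (perfection and Cohen--Macaulayness of $R/I$, the strongly Cohen--Macaulay property via the bound $\mu(I)\le\Ht(I)+2$, and the $\G_\infty$ condition), and that $D-d_0=0$ exactly — it is the equality $\dim(R/I)=\mu(I)-\grade(I,R)=2$ that makes the depth vanish and hence the corollary true. If instead $\mu(I)$ were smaller or $\Ht(I)$ larger, $D-d_0$ could be positive and the conclusion would fail, so the sharpness of the hypotheses is exactly what needs checking.
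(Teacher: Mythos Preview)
Your proof is correct and follows the paper's own argument essentially verbatim: check strongly Cohen--Macaulay via Example~\ref{av}(ii), verify $\G_\infty$ from the locally complete intersection hypothesis, apply Lemma~\ref{5} to obtain $\depth(R/I^n)=0$ for all $n\ge 3$, and translate via Fact~\ref{locc}. One tiny slip in your $\G_\infty$ check: for $\fp\in\V(I)\setminus\{\fm\}$ you wrote $\mu(I_{\fp})=\Ht(I_{\fp})=\Ht(\fp)$, but only the first equality holds in general---one has $\Ht(I_{\fp})=d-2\le\Ht(\fp)$, which is still enough for the $\G_\infty$ conclusion.
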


\begin{proof}
As $\mu(I)-\grade(I,R)\leq2$, $I$ is strongly Cohen-Macaulay, because of Example \ref{av}. Since $\mu (I_{\fp})\leq \Ht(\fp)$, we see $I$ is $\G_\infty$. By Lemma \ref{5},
$\depth(R/ I^{n}) =0$  for all $n\geq 3$. Since $I$ is locally complete intersection and in view of  Fact \ref{locc}, we observe that
$I^{(n)}/ I^n=\HH^0_{\fm}(R/I^n)$. So,  $I^n\neq I^{(n)}$  for all $n\geq 3$.
\end{proof}

The Cohen-Macaulay assumption is important:

\begin{example}
Let $\mathcal{C}$ be the curve in $\PP^3$ parameterized by $\{s^4,s^3t,st^3,t^4\}$. This is the  \textit{Macaulay}'s curve. Denote the ideal
of definition  of $\mathcal{C}$  by $\fp$ which is a prime ideal in $ k[x,y,z,w]$. This is well-known that $\mu(\fp)=4$, $\fp^n=\fp^{(n)}$ for all $n$ and that
$R/\fp$ is not Cohen-Macaulay, see \cite[Example]{tra}. One can show that $\fp$ is locally complete intersection (for a quick proof please see Theorem \ref{tst}, below).
Also, $\Ht(\fp)=2$.
In particular, the Cohen-Macaulay assumption in Theorem \ref{f} is really important.
\end{example}

\section{Towards the rigidity in dimension four}

\begin{conjecture}\label{4.1}
(See \cite[Conjecture 3.8]{9}) Let $X\subset \PP^3_k$
be a subvariety (reduced and unmixed) of codimension $2$.
Assume that there is a point $\fp\in X$ such that the localization of $I_X$ at $\fp$ is not a complete
intersection. Then  $\forall m \geq 2$, the saturation of $I^m_X$ has an embedded
component at $\fp$. In particular, $I_X^m\neq I_X^{(m)}$ for all $m\geq 2$.
\end{conjecture}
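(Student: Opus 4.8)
The plan is to reduce \textbf{Conjecture \ref{4.1}} in the irreducible case to \textbf{Theorem \ref{f}} (equivalently \textbf{Corollary \ref{f1}}). First I would pass from the projective to the local setting: let $X\subset\PP^3_k$ be an irreducible subvariety of codimension $2$, with homogeneous prime ideal $\fp:=I_X\lhd S:=k[x_0,\dots,x_3]$, and let $\fq$ be the homogeneous prime corresponding to the distinguished point $p\in X$ at which $\fp_\fq$ is not a complete intersection. Localizing at $\fq$ gives a $4$-dimensional regular local ring $R:=S_\fq$ and a height-two prime $I:=\fp S_\fq$ which, \emph{at that point}, is not a complete intersection. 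The technical point here is that symbolic powers commute with localization (the Lemma \ref{st} invoked in the proof sketch of Fact \ref{locc}), so $I^{(m)}=(\fp^{(m)})_\fq$ and $I^m=(\fp^m)_\fq$; hence $\fp^m\neq\fp^{(m)}$ will follow once I establish $I^m\neq I^{(m)}$ in $R$.

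Next I would verify the hypotheses of the relevant result. Since $X$ is irreducible, $R/I$ is a domain, hence in particular it is unmixed; the genuinely new input I need is Cohen-Macaulayness of $R/I$, which for a codimension-two ideal in a regular ring is automatic once $I$ is perfect, and perfectness in turn is what lets me apply Example \ref{av}(i) to get strongly Cohen-Macaulay. The locally-complete-intersection hypothesis of Theorem \ref{f} away from the closed point is exactly the statement that $X$ is smooth (a local complete intersection) away from finitely many points, and I would either assume this as part of the "irreducible case" or reduce to it by noting that generic smoothness and the codimension-two perfect structure force $I_\fr$ to be a complete intersection for all primes $\fr$ of height $\leq 3$ strictly contained in $\fq$ — after possibly shrinking; the key quantitative point is that $\mu(I_\fr)\le \Ht(\fr)$ for such $\fr$ by the locally-complete-intersection condition, while at $\fq$ itself the failure of complete intersection combined with perfectness of a height-two ideal forces $\mu(I)\ge 3$, and (since $I$ is a prime and not a complete intersection locally) one in fact expects $\mu(I)\ge d=4$ or at least $\mu(I)\ge d-1=3$. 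In the former case Theorem \ref{f} applies verbatim; in the latter (exactly $3$ generators) Corollary \ref{f1} — which only needs "at least $d$" after noting $\mu(I)\ge d-1$ and, crucially, that $\mu(I)\le d-1$ is \emph{excluded} here by Lemma \ref{twoc1} because $I^{(m)}\ne I^m$ is precisely what we are proving — gives $I^{d-1}=I^{(d-1)}$ a contradiction, hence $I^{d-1}\ne I^{(d-1)}$, and then Proposition \ref{thm}(b)'s analogue for codimension two (the last paragraph of the proof of Theorem \ref{f}, using $(\ast)$) propagates this to all $m\ge d-1=2$.

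Having shown $I^m\ne I^{(m)}$ for all $m\ge 2$, I would upgrade to the sharper assertion that $(I^m)^{\sat}$ has an embedded component at $\fq$. Here I use Fact \ref{locc}: since $I$ is locally complete intersection, $I^{(m)}/I^m=\HH^0_\fm(R/I^m)$, so $I^m\ne I^{(m)}$ says exactly $\depth(R/I^m)=0$, i.e. $\fm=\fq R_\fq\in\Ass(R/I^m)$, i.e. $I^m_X$ has an embedded primary component supported at $p$ — and it is an \emph{embedded} component because $\fq$ strictly contains the (height-two) minimal primes of $I^m$. Globalizing back to $S$, the saturation $(I^m_X)^{\sat}$ is obtained from $I^m_X$ by removing the $\fm_S$-primary component, which does not affect the $\fq$-primary one, so $(I^m_X)^{\sat}$ still has an embedded component at $p$; this gives the "in particular" clause as a corollary.

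The main obstacle I anticipate is the gap between "$I_\fq$ is not a complete intersection" and the clean numerical hypotheses "$\mu(I)\ge d$" (or "$=d$") and "Cohen-Macaulay" needed to invoke Theorems \ref{f}/\ref{f1}: a priori a codimension-two prime that fails to be a complete intersection at one point could still be a complete intersection at \emph{that} point in the wrong sense, or $R/I$ could fail to be Cohen-Macaulay (the Macaulay curve example in the excerpt shows Cohen-Macaulayness is not free), so one must either build these into the meaning of "irreducible case" or argue that, for a codimension-two \emph{locally complete intersection} irreducible $X\subset\PP^3$ with one bad point, perfectness is forced and $\mu(I)\ge d-1$ follows from Lemma \ref{twoc1} applied contrapositively. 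Handling the borderline case $\mu(I)=d-1=3$ — where one cannot use Theorem \ref{f} directly and must instead run the Corollary \ref{f1} / \cite[Theorem 3.2]{mor} argument — is where the real care is needed; everything else is bookkeeping about localization and associated primes.
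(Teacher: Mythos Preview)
Your plan has a genuine gap, and it stems from a dimension miscount that sends you toward the wrong tool. The ``point'' $\fp\in X\subset\PP^3$ at which $I_X$ fails to be a complete intersection corresponds to a homogeneous prime of height~$3$ in $S=k[x_0,\dots,x_3]$ (it strictly contains the height-two prime $I_X$ and is strictly contained in the irrelevant ideal). Hence the localization $S_\fq$ is a \emph{three}-dimensional regular local ring, not four-dimensional. Once you are in dimension~$3$ with a height-two prime that is not a complete intersection, the result you want is exactly Lemma~\ref{par1} (Huneke's theorem for $3$-dimensional regular local rings): it gives $(I_X)_\fq^m\neq (I_X)_\fq^{(m)}$ for all $m>1$ with no Cohen--Macaulay, no locally-complete-intersection, and no generator-count hypotheses whatsoever. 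Pulling this back via Lemma~\ref{st} yields $I_X^m\neq I_X^{(m)}$ immediately. This is precisely the paper's argument.

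By contrast, your route through Theorem~\ref{f}/Corollary~\ref{f1} cannot work as stated. If you insist on a $4$-dimensional ambient ring you must localize at the irrelevant ideal $\fm$, but then the bad point $\fq$ sits in $\V(I)\setminus\{\fm\}$ and witnesses that $I$ is \emph{not} locally complete intersection---the very hypothesis Theorem~\ref{f} requires. Moreover, Conjecture~\ref{4.1} does not assume $X$ is arithmetically Cohen--Macaulay (the Macaulay curve in the paper is a height-two prime that is not Cohen--Macaulay), so that hypothesis of Theorem~\ref{f} is also unavailable. The ``obstacles'' you flag in your last paragraph are therefore not technicalities to be patched but structural reasons the reduction to Theorem~\ref{f} is the wrong strategy; the correct reduction is to dimension~$3$ and Lemma~\ref{par1}.
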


\begin{observation}
The monomial-situation rarely happens:
Recall that  $I$ is an ideal in the ring $R:=k[x_1,\ldots,x_4]$.
As $I$ is radical  unmixed  monomial and 2-dimensional  we have $$I=\rad(I)=\bigcap_{\textit{for some pairs } (i\neq j)} (x_i,x_j).$$
Let $G$ be the \textit{graph} with the vertex set $\{1,\ldots,4\}$ where $\{i,j\}$ is an edge, if
such a pair does not appear in the above intersection. Suppose on the contradiction that $I^m= I^{(m)}$ for some $m\geq 2$.
Thanks to \cite[Lemma 3.1]{edg}, $G$ is a path
or a cycle or the union of two disjoint edges.
In the case of union of two disjoint edges, $I$
is locally complete intersection (see \cite[Example 1.18]{ter}) which is excluded by the conjecture.  The corresponding ideal of the paw-graph

$$\begin{tikzpicture}
  [scale=.7,auto=left,every node/.style={circle, fill=blue!10}]
  \node (n1) at (1,5) {1};
  \node (n2) at (1,7)  {2};
  \node (n3) at (2,6)  {3};
  \node (n4) at (4,6)  {4};
 \foreach \from/\to in {n1/n2,n1/n3,n2/n3,n3/n4}
    \draw (\from) -- (\to);
\end{tikzpicture}$$
is $(x_1x_4,x_2x_4)$ which is of height one. This is excluded.  The  ideal of the diamond

$$\begin{tikzpicture}
  [scale=.7,auto=left,every node/.style={circle,fill=blue!10}]
  \node (n1) at (1,2) {1};
  \node (n2) at (3,3)  {2};
  \node (n3) at (5,2)  {3};
  \node (n4) at (3,1)  {4};
 \foreach \from/\to in {n1/n2,n1/n4,n2/n3,n3/n4,n2/n4}
    \draw (\from) -- (\to);
\end{tikzpicture}$$
is $(x_1x_3)$ which is of height one. This is excluded.  The corresponding ideal of

$$\begin{tikzpicture}
  [scale=.7,auto=left,every node/.style={ circle,fill=blue!10}]
  \node (n1) at (1,5) {1};
  \node (n2) at (1,7)  {2};
  \node (n3) at (2,6)  {3};
  \node (n4) at (4,6)  {4};
 \foreach \from/\to in {n1/n2,n1/n3,n2/n3}
    \draw (\from) -- (\to);
\end{tikzpicture}$$
is $(x_1x_4,x_2x_4,x_3x_4)$ which is of height one. This is excluded.  Also, the corresponding ideal of tetrahedral graph
excluded. Therefore,  $G$ is either 4-gon (square graph) or $4$-pointed path. In particular,
it is connected. Deduce from \cite[Proposition 1.11]{ter} that $I$ is locally complete intersection which is excluded by the conjecture.
\end{observation}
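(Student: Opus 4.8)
The plan is to convert the statement into a question about graphs on four vertices and then carry out a finite case check. Since $I$ is radical, monomial, unmixed and two-dimensional in $R=k[x_1,\ldots,x_4]$, primary decomposition of squarefree monomial ideals shows that $I=\bigcap_{(i,j)\in S}(x_i,x_j)$ for a set $S$ of unordered pairs, and recording $S$ as the non-edges of a graph $G$ on $\{1,2,3,4\}$ makes $I$ — together with all of its ordinary and symbolic powers — a function of $G$ alone. So the task reduces to this: classify the $G$ for which $I^m=I^{(m)}$ for some $m\geq 2$, and then check that every such $G$ either forces $I$ to be locally complete intersection (contrary to the standing hypothesis of Conjecture \ref{4.1} that $I_X$ is not a complete intersection at some point) or forces the relevant ideal out of codimension two.

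First I would assume, toward a contradiction, that $I^m=I^{(m)}$ for some $m\geq 2$, and feed this into the combinatorial classification \cite[Lemma 3.1]{edg}, which then restricts $G$ to being a path, a cycle, or the disjoint union of two edges. On four vertices this is a short explicit list, so the bulk of the argument is an inspection of the finitely many graphs that can occur (and the few complements that show up along the way): a disjoint union of two edges, the paw, the diamond, the claw, $K_4$, the $4$-cycle, the $4$-vertex path, and the degenerate smaller graphs.

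Next I would eliminate every entry of that list. For a disjoint union of two edges, \cite[Example 1.18]{ter} gives that $I$ is locally complete intersection, which is excluded by the hypothesis of Conjecture \ref{4.1}. For the paw, the diamond, the claw and $K_4$, a one-line computation shows the corresponding monomial ideal has height one rather than two, so these fall outside the codimension-two hypothesis of the conjecture. The only survivors are the $4$-cycle and the $4$-vertex path; both are connected, and for connected $G$ in this range \cite[Proposition 1.11]{ter} again yields that $I$ is locally complete intersection, once more contradicting the hypothesis of the conjecture. Hence no monomial ideal $I$ can meet simultaneously the hypothesis of Conjecture \ref{4.1} and the equality $I^m=I^{(m)}$ for some $m\geq 2$; equivalently, in the monomial setting Conjecture \ref{4.1} holds vacuously, which is exactly what the observation records.

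The real obstacle I expect is not any single case but keeping the dictionary between $S$, the graph $G$, its complement, and the explicit generators of $I$ perfectly consistent throughout: one must be careful about whether \cite[Lemma 3.1]{edg} constrains $G$ or its complement, confirm that it is the single-power statement $I^m=I^{(m)}$ that is being invoked rather than equality for all powers, and check that the height-one computation is genuinely the correct exclusion in each non-complete-intersection case. Once this bookkeeping is pinned down, every graph on the finite list is dispatched by inspection, and the two appeals to \cite{ter} close the argument.
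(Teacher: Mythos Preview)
Your proposal is correct and matches the paper's approach line for line: invoke \cite[Lemma 3.1]{edg} to restrict $G$, dispose of the degenerate graphs by a height-one computation, and finish via \cite{ter} to show that the only survivors ($C_4$ and $P_4$, together with $2K_2$) are locally complete intersection. Your caveat about the graph-versus-complement dictionary is well placed --- the paper lists the triangle-with-isolated-vertex where you write ``claw'' (they are complements of one another), but the finite case check is otherwise identical.
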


The following result is well-known (see e.g. the stacks project).

\begin{lemma}\label{st}
Let $R\to S$ be a flat ring map (e.g.  localization with respect to a multiplicative closed set). Let $\fq\lhd R$ be a prime ideal such that $\fp=\fq S$ is a prime ideal of $S$. Then $\fq^{(n)}S=\fp^{(n)}$.
\end{lemma}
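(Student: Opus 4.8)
The plan is to reduce to the two facts that a flat map commutes with finite intersections of submodules and that symbolic powers are defined by such an intersection. First I would recall the description $\fq^{(n)} = \bigcap_{\mathfrak{r} \in \Ass(R/\fq^n)} (\fq^n R_{\mathfrak{r}} \cap R)$; since $\fq$ is prime and $R$ is (for our purposes) Noetherian, $\Ass(R/\fq^n) \subseteq \V(\fq)$, so this can equally be written as the $\fq$-primary component of $\fq^n$, i.e. $\fq^{(n)} = \fq^n R_\fq \cap R$. The analogous statement holds for $\fp = \fq S$ in $S$: $\fp^{(n)} = \fp^n S_\fp \cap S$. So the task is to compare $\fq^n R_\fq \cap R$ after base change to $S$ with $\fp^n S_\fp \cap S$.

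The key steps, in order, are: (1) base change the containment $\fq^n \subseteq \fq^{(n)}$ along the flat map $R \to S$ to get $\fq^n S \subseteq \fq^{(n)} S$, and note $\fq^n S = \fp^n$ since $\fp = \fq S$; hence $\fp^n \subseteq \fq^{(n)} S$. (2) Because $\fq^{(n)}/\fq^n$ is $\fq$-power-torsion (every element is killed by some power of some element outside... more precisely $\fq^{(n)}/\fq^n$ has support inside $\V(\fq)$ and is annihilated after inverting one element of $R \setminus \fq$), its image $\fq^{(n)}S/\fp^n$ is $\fp$-power-torsion as an $S$-module, using that localization/flat base change preserves the relevant annihilator relations and that $\mathfrak{r} S$ remains $\subseteq \fp$ for the associated primes involved. (3) Conversely, from $\fp^n \subseteq \fq^{(n)}S$ and flatness one gets $\fq^{(n)}S \subseteq \fp^n S_\fp \cap S = \fp^{(n)}$, because $\fq^{(n)}S$ is $\fp$-saturated: an element of $\fq^{(n)}$ maps into $\fq^n R_\fq$, which maps into $\fp^n S_\fp$. (4) For the reverse inclusion $\fp^{(n)} \subseteq \fq^{(n)}S$, use that $S \to S_\fp$ factors through $S_\fp = (S \setminus \fp)^{-1}S$ and that $R_\fq \to S_\fp$ is flat local with $\fq R_\fq S_\fp = \fp S_\fp$; then an element $s \in \fp^{(n)} = \fp^n S_\fp \cap S$ lies in $\fq^n R_\fq S_\fp$, and a standard "clearing denominators within $S$" argument (valid since $R \to S$ is flat, so intersections such as $\fq^n S \cap (\text{submodule})$ are computed by base change) places $s$ in $\fq^{(n)}S$.

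The main obstacle I expect is step (4), the inclusion $\fp^{(n)} \subseteq \fq^{(n)}S$: the containment $\fq^n \subseteq \fp^n$ is easy, but going \emph{back down} from the $\fp$-primary saturation in $S$ to the base-changed $\fq$-primary saturation in $R$ requires knowing that $\fq^{(n)}S$ is already $\fp$-saturated in $S$, equivalently that $\Ass(S/\fq^{(n)}S) \subseteq \V(\fp)$. This is where flatness is used in an essential way: by the standard formula for associated primes under flat base change, $\Ass(S/\fq^{(n)}S) = \bigcup_{\mathfrak{r} \in \Ass(R/\fq^{(n)})} \Ass(S/\mathfrak{r}S)$, and since $\fq^{(n)}$ is $\fq$-primary we have $\Ass(R/\fq^{(n)}) = \{\fq\}$, so every prime in $\Ass(S/\fq^{(n)}S)$ lies over $\fq$ and hence (using $\fp = \fq S$ prime) equals $\fp$; thus $\fq^{(n)}S$ is $\fp$-primary, in particular $\fp$-saturated, forcing $\fq^{(n)}S = \fq^n S_\fp \cap S = \fp^n S_\fp \cap S = \fp^{(n)}$. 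The remaining verifications — that flat base change commutes with the intersection defining $\fq^{(n)}$, and that $\fq^n S = \fp^n$ — are routine. For the localization case explicitly named in the statement, all of this is classical and one may simply cite it; I would phrase the general flat case via the associated-primes argument above.
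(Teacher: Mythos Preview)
Your argument is correct, and in particular the clean route you land on at the end---show $\fq^{(n)}S$ is $\fp$-primary via the flat base-change formula $\Ass_S(S/\fq^{(n)}S)=\Ass_S(S/\fq S)=\{\fp\}$, then conclude $\fq^{(n)}S=(\fq^{(n)}S)S_\fp\cap S=\fp^n S_\fp\cap S=\fp^{(n)}$---is exactly the standard proof. One small point you glide over: to know that $R_\fq\to S_\fp$ exists (equivalently that elements of $R\setminus\fq$ land in $S\setminus\fp$) you need $\fp\cap R=\fq$; this does follow from the hypotheses, since $R/\fq\to S/\fq S$ is a flat map from a domain to a nonzero ring and is therefore injective. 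With that noted, your steps (3)--(4) already give both inclusions, and the earlier steps (1)--(2) are superfluous.

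As for comparison with the paper: there is nothing to compare. The paper does not prove Lemma~\ref{st}; it simply records the statement as well-known and points to the Stacks project. Your write-up is essentially the proof one finds there, so you are in agreement with the cited source even though the paper itself supplies no argument.
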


 \begin{theorem}\label{tst}
Conjecture \ref{4.1} is true for irreducible varieties.
\end{theorem}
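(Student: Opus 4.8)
The plan is to reduce Conjecture \ref{4.1} for an irreducible variety $X \subset \PP^3_k$ to the local statements already established in Section 2 and Section 3, exploiting that $I_X$ is then a height-two prime $\fp$ in the polynomial ring $R = k[x_0,\ldots,x_3]$ (equivalently, after localizing, in a $4$-dimensional regular local ring). First I would fix the hypothesis precisely: there is a point $\fp$ on $X$ (a prime of $R$ containing $I_X$, of height $3$ since $X$ is a curve and the point is closed) at which $(I_X)_\fp$ fails to be a complete intersection. The conclusion to be reached is that the saturation $(I_X^m)^{\sat}$ has an embedded component at that point for every $m \geq 2$; via Fact \ref{locc}/Fact \ref{tr} this is exactly the statement that $\HH^0_{\fq}((R/I_X^m)_{\fq}) \neq 0$, i.e. $\depth (R_{\fq}/\fp^m R_{\fq}) = 0$, for $\fq$ the prime of that point. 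So the whole question becomes a depth computation for the local ring $R_{\fq}$, which is a $3$-dimensional regular local ring, and the prime $\fp R_{\fq}$ is one-dimensional in it.

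Next I would split into cases according to $\mu(\fp R_{\fq})$. Since $\fp R_{\fq}$ has height two in a $3$-dimensional regular local ring and is not a complete intersection, $\mu(\fp R_{\fq}) \geq 3$; and since $X$ is a curve (so $(R/I_X)_{\fq}$ is one-dimensional, Cohen-Macaulay exactly when unmixed there) one expects $\mu(\fp R_{\fq}) = 3$, i.e. $\fp R_{\fq}$ is an almost complete intersection. If $R/I_X$ is Cohen-Macaulay at $\fq$, then $\fp R_{\fq}$ is a Cohen-Macaulay (perfect, strongly Cohen-Macaulay, $\G_\infty$) almost complete intersection, and Lemma \ref{par1} (Huneke's $3$-dimensional result, \cite[Corollary 2.5]{h}) applies directly: $\fp^i R_{\fq} \neq \fp^{(i)} R_{\fq}$ for all $i > 1$, hence by Fact \ref{tr} $\depth(R_{\fq}/\fp^i R_{\fq}) = 0$ for all $i \geq 2$, which is the desired embedded component. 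If $R/I_X$ is not Cohen-Macaulay at $\fq$, I would instead invoke Lemma \ref{com} together with Proposition \ref{thm}: Lemma \ref{com} gives $\fp^{(2)} \neq \fp^2$ locally at $\fq$ (since $\fp R_{\fq}$ is not generated by a regular sequence and $\dim (R/\fp)_{\fq} \le 1$), and then Proposition \ref{thm}(b) propagates the inequality $\fp^{i_0} \ne \fp^{(i_0)}$ to all larger $i$, starting from $i_0 = 2$; combined with part (a), failure at $i_0=2$ also rules out equality ever holding for $i > 1$. Either way one concludes $\depth(R_{\fq}/\fp^m R_{\fq}) = 0$ for all $m \geq 2$.

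Finally I would assemble the global conclusion: by Lemma \ref{st}, symbolic powers localize, so $\fp^{(m)} R_{\fq} = (\fp R_{\fq})^{(m)}$, and the local computation shows $(\fp R_{\fq})^{(m)} \supsetneq \fp^m R_{\fq}$; since $(R/\fp^m R_{\fq}) = (R/\fp^m)_{\fq}$ has depth zero, $\fq$ is an associated prime of $R/\fp^m$ of height $3 > \Ht(\fp) = 2$, i.e. an embedded prime, and its contribution to $R/\fp^m$ is precisely a component of $(I_X^m)^{\sat}/I_X^m$ supported at the point — so $(I_X^m)^{\sat}$ has an embedded component at $\fp$, and in particular $I_X^m \neq I_X^{(m)}$, for all $m \geq 2$.

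The main obstacle I anticipate is handling the non-Cohen-Macaulay case cleanly: Lemma \ref{com} is stated for prime ideals and gives only the $m=2$ step, and Proposition \ref{thm} as stated carries the ``generated by at most $d$ elements'' / weak $d$-sequence hypothesis, so I must check that an almost complete intersection one-dimensional prime in a $3$-dimensional regular local ring genuinely falls under its hypotheses (here $d=3$, $\mu(\fp R_{\fq}) = 3 = d$, $\Ht = 2 = d-1$, so \cite[1.3]{dseq} applies). The other delicate point is justifying $\mu(\fp R_{\fq}) = 3$ rather than $\geq 4$ for a curve that is not a complete intersection at a point — if $\mu$ can jump to $d$ at a non-Cohen-Macaulay point one may need the structure theory of such local primes (or to argue via the Macaulay curve example that the relevant resolutions still force $\depth = 0$ for $m \ge 2$); I would treat the clean ACI case first and remark on the general one.
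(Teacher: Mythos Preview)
Your core approach matches the paper's: localize at the bad point (a height-$3$ prime), land in a $3$-dimensional regular local ring with a height-two prime that is not a complete intersection, and invoke Lemma~\ref{par1}; then use Lemma~\ref{st} to pull the inequality $\fp^m \neq \fp^{(m)}$ back to $R$. That is exactly what the paper does, in four lines.

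However, you have layered on two unnecessary complications. First, Lemma~\ref{par1} carries \emph{no} Cohen--Macaulay hypothesis and \emph{no} bound on $\mu$: it applies to any one-dimensional prime in a $3$-dimensional regular local ring that is not a complete intersection. Your case split (CM vs.\ non-CM at the point) is therefore superfluous, and the detour through Lemma~\ref{com} and Proposition~\ref{thm}(b) --- with its attendant worry about whether $\mu(\fp R_{\fq}) \leq 3$ --- is never needed. Second, your non-CM branch is in fact vacuous: $(R/I_X)_{\fq}$ is a one-dimensional local \emph{domain}, hence automatically Cohen--Macaulay, so that case cannot occur. Once you drop these, your ``main obstacle'' paragraph evaporates. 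One point in your favor: your final paragraph spells out the embedded-component conclusion at the height-$3$ point more explicitly than the paper's proof, which only states the consequence $I_X^m \neq I_X^{(m)}$.
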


\begin{proof}
Let $\fq$ be the defining ideal of the variety $X\subset\PP^3$. Then, $\fq$ is a  height-two prime ideal in $R:=k[x_1,\ldots,x_4]$
which is not locally complete intersection. By definition, there is a homogeneous  prime ideal $\fp\in\V(\fq)\setminus\{\fm\}$
such that $\fq R_{\fp}$ can not be generated by a regular sequence.
Note that $\fq R_{\fq}$ is generated by a regular sequence, because it is a maximal ideal of a regular local ring.
Deduce from this that $\fp\supsetneqq \fq$. One may find easily that $\fp\subsetneqq \fm$. We conclude by this that $\Ht(\fp)=3$.
We summarize things as follows: $R_{\fp}$ is a regular local
ring of dimension $3$ and $\fq R_{\fp}$  is a prime ideal of height two which is not a complete intersection ideal. Set $\mathcal{S}:=R\setminus \fp$. In the light of Lemma \ref{par1}, $$(\mathcal{S}^{-1}\fq)^{m}\neq(\mathcal{S}^{-1}\fq)^{(m)}\quad \forall m>1\quad(\ast)$$One can find easily that $(\mathcal{S}^{-1}\fq)^{m} =\mathcal{S}^{-1}(\fq^{m})$  . If $\fq^{m}=\fq^{(m)}$ were be the case, in view of Lemma \ref{st}, we should have $$(\mathcal{S}^{-1}\fq)^{m} =\mathcal{S}^{-1}\fq^{m}=\mathcal{S}^{-1}\fq^{(m)}=(\mathcal{S}^{-1}\fq)^{(m)},$$ which is a contradiction via $(\ast)$. So, $\fq^{m}\neq\fq^{(m)}$ for all $m>1$.
\end{proof}

\begin{corollary}\label{four}
Let $\fp$ be a $2$-dimensional Cohen-Macaulay prime ideal in a  regular local ring  of dimension four. If $\fp^{(3)} = \fp^{3}$, then $\fp^n=\fp^{(n)}$
for all $n \geq 1$. 
\end{corollary}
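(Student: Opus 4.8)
The plan is to let the single hypothesis $\fp^{(3)}=\fp^3$ do all the work: it will force $\fp$ to be locally complete intersection and minimally generated by at most $d-1$ elements, and then Lemma \ref{twoc1} closes the argument. Throughout put $d=\dim R=4$. First I would record the structure of $\fp$: since $R$ is a $4$-dimensional regular (hence catenary) local domain and $\dim R/\fp=2$, we have $\Ht(\fp)=2$; and since $R/\fp$ is Cohen-Macaulay with $\pd(R/\fp)<\infty$, the ideal $\fp$ is perfect of grade two. Thus $\fp$ is a Cohen-Macaulay height-two ideal of the type occurring in Theorem \ref{f}, Corollary \ref{f1} and Lemma \ref{twoc1}, and $d=4>2$, so all three become available once their remaining hypotheses are verified.

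The first real step is to show that $\fp$ is locally complete intersection. Suppose not; then there is $\fq\in\V(\fp)\setminus\{\fm\}$ with $\fp R_\fq$ not a complete intersection. Arguing exactly as in the proof of Theorem \ref{tst}, $\fp R_\fp$ is the maximal ideal of the regular local ring $R_\fp$, hence a complete intersection, so $\fq\supsetneq\fp$; catenarity then gives $\Ht(\fq)\geq\Ht(\fp)+1=3$, while $\fq\subsetneq\fm$ forces $\Ht(\fq)\leq d-1=3$, so $\Ht(\fq)=3$. Consequently $R_\fq$ is a $3$-dimensional regular local ring and $\fp R_\fq$ is a one-dimensional prime which is not a complete intersection, so Lemma \ref{par1} yields $(\fp R_\fq)^i\neq(\fp R_\fq)^{(i)}$ for every $i>1$. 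On the other hand ordinary powers commute with localization and, by Lemma \ref{st}, so do symbolic powers; hence $\fp^{(3)}=\fp^3$ would give $(\fp R_\fq)^3=\fp^3R_\fq=\fp^{(3)}R_\fq=(\fp R_\fq)^{(3)}$, contradicting the case $i=3$ above. Therefore $\fp$ is locally complete intersection.

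Next I would bound $\mu(\fp)$. If $\mu(\fp)\geq d=4$, then $\fp$ is a Cohen-Macaulay height-two locally complete intersection ideal minimally generated by at least $d$ elements in a regular local ring of dimension $d>2$, so Corollary \ref{f1} (or Theorem \ref{f}) gives $\fp^{d-1}\neq\fp^{(d-1)}$, i.e. $\fp^3\neq\fp^{(3)}$, again contradicting the hypothesis. Hence $\mu(\fp)\leq d-1$. Now $\fp$ is Cohen-Macaulay of height two, locally complete intersection, $d>2$, and $\mu(\fp)\leq d-1$, so Lemma \ref{twoc1} gives $\fp^i=\fp^{(i)}$ for all $i>1$; together with the trivial equality at $i=1$ this is precisely $\fp^n=\fp^{(n)}$ for all $n\geq 1$.

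Apart from Step 1 the proof is essentially bookkeeping on top of Lemmas \ref{par1}, \ref{st}, \ref{twoc1} and Corollary \ref{f1}; the main obstacle is the locally-complete-intersection reduction, and within it the two points that need genuine care are that the bad prime $\fq$ has height exactly $3$ — so that the three-dimensional result Lemma \ref{par1} applies — and that $\fp^{(3)}=\fp^3$ descends to $R_\fq$, which relies on the compatibility of symbolic powers with flat base change furnished by Lemma \ref{st}.
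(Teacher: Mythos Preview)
Your argument is correct and follows exactly the route taken in the paper: the locally-complete-intersection reduction is precisely what the paper abbreviates as ``by the proof of Theorem \ref{tst}'', and the remaining dichotomy on $\mu(\fp)$ via Corollary \ref{f1} and Lemma \ref{twoc1} is identical. Your write-up simply unpacks each step in more detail.
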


\begin{proof}
By the proof of Theorem \ref{tst} we may assume that $\fp$  is locally complete intersection. If $\mu(\fp)\leq 3$, by the help
of Lemma \ref{twoc1},  we observe that $\fp^n=\fp^{(n)}$
for all $n \geq 1$. Suppose $\mu(\fp)\geq4$. Then we are in the situation of Corollary \ref{f1}. In view of Corollary \ref{f1}, $\fp^{(3)} \neq \fp^{3}$.
This is excluded by the assumptions. The proof is now complete.
\end{proof}

\section{Towards non-rigidity}

The rings in this section are of zero characteristic.

\begin{discussion}\label{disb}
Let $X \subset \PP^n$ be a
projective variety. Given a rational map $\textsc{F} :X\dashrightarrow \PP^n$ into another projective space,
$Y \subset \PP^n$ denote its image. Recall that $\textsc{F}$ is called birational onto its image if there exists a
rational map $\textsc{G} : Y\dashrightarrow \PP^n$ whose image is $X$. When such a thing happens we say $\textsc{F}$ is a \textit{Cremona transformation}.
Note that  $\textsc{F}$ (resp. $\textsc{G}$) determines by forms $\underline{f}:=f_0,\ldots,f_n$ (resp. $\underline{g}$) of same degree. By $d$ (resp. $d'$)
we mean $\deg(f_i)$ (resp. $\deg(g_j)$).
Also, $\underline{g}$ is called \textit{representatives of the inverse}.
The ideal generated by $\underline{f}$ is called the \textit{base ideal}.
By a result of Gabber, $d'\leq d^{n-1}$, see \cite{Gabber}.
In particular, when $n=2$ one can recover the classical result  $d=d'$.
\end{discussion}

The following is a method to construct non-rigid ideals.

\begin{proposition}\label{sr}
Adopt the above notation and let $I$ be
the base ideal generated by forms of degree $d\geq2$. Assume the following conditions hold:
\begin{enumerate}
\item[i)]$\depth(R/I)>0 $,
\item[ii)]$I^{(\ell)}/I^{\ell} $ is either zero or $\fm$-primary for all $\ell$, and
\item[iii)] The Rees algebra $\R(I)$ satisfies Serre's condition $\Se_2$.
\end{enumerate}
Then
$I^{\ell}=I^{(\ell)}$ for all $\ell<d'$ and $I^{d'}\neq I^{(d')}$, where $d'$ is the degree of representatives of the inverse.
\end{proposition}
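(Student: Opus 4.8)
The plan is to recast the statement as one about the depth function $f(\ell):=\depth(R/I^{\ell})$, to locate the distinguished value $\ell=d'$ by means of the source inversion factor of $\textsc{F}$, and to use hypothesis (iii) to control the remaining small powers.

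\emph{Reduction to depth.} I would first note that (i) and (ii) together force $I^{(\ell)}=(I^{\ell})^{\sat}$ for every $\ell$: one always has $I^{\ell}\subseteq I^{(\ell)}$, and since $\depth(R/I)>0$ means $\fm\notin\Ass(I)$, localizing at the primes of $\Ass(I)$ --- all properly contained in $\fm$ --- gives $(I^{\ell})^{\sat}\subseteq I^{(\ell)}$; conversely, (ii) makes $I^{(\ell)}/I^{\ell}$ an $\fm$-torsion submodule of $R/I^{\ell}$, hence contained in $(I^{\ell})^{\sat}/I^{\ell}=\HH^{0}_{\fm}(R/I^{\ell})$. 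Thus, exactly as in Fact \ref{locc},
$$I^{\ell}=I^{(\ell)}\ \Longleftrightarrow\ \HH^{0}_{\fm}(R/I^{\ell})=0\ \Longleftrightarrow\ f(\ell)\geq 1,$$
so the assertion becomes: $f(\ell)\geq 1$ for all $\ell<d'$, and $f(d')=0$.

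\emph{The power $d'$.} By Discussion \ref{disb} we have $\textsc{G}\circ\textsc{F}=\id_{\PP^{n}}$; writing this out coordinatewise gives $g_{j}(\underline{f})=x_{j}D$ for one and the same nonzero form $D$ with $\deg D=dd'-1$. Since each $g_{j}$ is homogeneous of degree $d'$ and $f_{0},\dots,f_{n}\in I$, we get $g_{j}(\underline{f})\in I^{d'}$, hence $\fm D\subseteq I^{d'}$, i.e. $D\in(I^{d'}:\fm)\subseteq(I^{d'})^{\sat}$. On the other hand $I\subseteq\fm^{d}$ forces $I^{d'}\subseteq\fm^{dd'}$, so a nonzero form of degree $dd'-1$ cannot lie in $I^{d'}$. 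Therefore $D\in(I^{d'})^{\sat}\setminus I^{d'}$, so $\HH^{0}_{\fm}(R/I^{d'})\neq 0$, $f(d')=0$, and $I^{d'}\neq I^{(d')}$. (Note that $d\geq 2$, together with Gabber's bound applied to both $\textsc{F}$ and $\textsc{G}$, forces $d'\geq 2$, so the range $\ell<d'$ is non-empty, with $\ell=1$ already covered by (i).)

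\emph{The powers below $d'$ --- the crux.} Put $\mathcal{B}:=\bigoplus_{\ell}(I^{\ell})^{\sat}t^{\ell}$, a graded overring of $\R(I)$ inside $R[t]$. From $0\to\R(I)\to R[t]\to\bigoplus_{\ell}R/I^{\ell}\to 0$ and the vanishing $\HH^{0}_{\fm}(R[t])=\HH^{1}_{\fm}(R[t])=0$ one obtains a graded isomorphism $\mathcal{B}/\R(I)\cong\HH^{1}_{\fm}(\R(I))\cong\bigoplus_{\ell}\HH^{0}_{\fm}(R/I^{\ell})$, so it remains to prove $(\mathcal{B}/\R(I))_{\ell}=0$ for $\ell<d'$. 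This is where (iii) enters. As $\textsc{F}$ is dominant, the analytic spread of $I$ equals $\dim R$, so the ideal $\fm\R(I)$ has height one, with a unique minimal prime $\fn$ (pulled back from the minimal prime of the fibre cone $\R(I)/\fm\R(I)$, which is a domain); hence $\mathcal{B}/\R(I)$ is supported inside $\V(\fn)$. Were $(\mathcal{B}/\R(I))_{\fn}$ zero, $\mathcal{B}$ and $\R(I)$ would agree at every height-one prime, and the $\Se_2$ property of the domain $\R(I)$ (which yields $\R(I)=\bigcap_{\Ht\fp=1}\R(I)_{\fp}$) would force $\mathcal{B}=\R(I)$, contradicting the previous step. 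So $\mathcal{B}/\R(I)$ is nonzero and supported exactly along $\fn$, the exceptional prime of the blow-up $\Proj\R(I)\to\PP^{n}$; what is left is to pin down its lowest $t$-degree. A homogeneous element of $(I^{\ell})^{\sat}\setminus I^{\ell}$ of least degree encodes, after being multiplied into $\fm$, a representative of the inverse of $\textsc{F}$ obtained already in degree $\ell$, which is impossible for $\ell<d'$ by the very definition of $d'$; and the reciprocal Gabber inequality $d\leq (d')^{\,n-1}$ of Discussion \ref{disb}, applied to the symmetric roles of $\textsc{F}$ and $\textsc{G}$, is what excludes any value below $d'$. Hence $(I^{\ell})^{\sat}=I^{\ell}$, so $f(\ell)\geq 1$ and $I^{\ell}=I^{(\ell)}$, for every $\ell<d'$.

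\emph{Expected main obstacle.} The genuinely delicate point is the last step: ruling out a drop of $f$ in a degree $<d'$. The two hypotheses divide the labour --- (iii) removes any codimension-$\geq 2$ or otherwise spurious contribution to $\mathcal{B}/\R(I)$, leaving a single codimension-one ``inversion defect'' whose initial degree must be read off from the birational geometry of $\Proj\R(I)$ --- and the final identification of that degree with $d'$ is where one must invoke the structure theory of the Rees algebra of a Cremona base ideal (Jacobian dual, inversion factor) rather than purely homological bookkeeping.
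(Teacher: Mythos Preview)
Your argument for $I^{d'}\neq I^{(d')}$ is correct and coincides with the paper's: both exhibit the source inversion factor $D$ of degree $dd'-1$ as an element of $I^{(d')}\setminus I^{d'}$. Your preliminary reduction $I^{(\ell)}=(I^{\ell})^{\sat}$ from hypotheses (i) and (ii) is also fine.

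The gap is in your treatment of $\ell<d'$. You correctly observe that (iii) forces the quotient $\mathcal{B}/\R(I)$ to be supported at the single height-one prime $\fn$ over $\fm\R(I)$, but this says nothing about \emph{which} $t$-degrees are nonzero. The crucial assertion---that a least-degree $h\in(I^{\ell})^{\sat}\setminus I^{\ell}$ yields, via $x_ih\in I^{\ell}$, representatives of an inverse map in degree $\ell$---is not justified. From $x_ih\in I^{\ell}$ you only get $x_ih=\sum_{|\alpha|=\ell} r_{\alpha}^{(i)}\underline{f}^{\alpha}$ with $r_{\alpha}^{(i)}\in R$, not forms $g_i$ of degree $\ell$ with scalar coefficients; the latter would require $\deg h=d\ell-1$ exactly, so that $x_ih$ lands in the minimal-degree piece $(I^{\ell})_{d\ell}=\sum_{|\alpha|=\ell}k\,\underline{f}^{\alpha}$, and nothing you have written forces this. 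Your invocation of the reciprocal Gabber bound $d\le(d')^{n-1}$ is likewise a non sequitur here; it constrains $d$ and $d'$, not the initial $t$-degree of $\HH^{1}_{\fm}(\R(I))$. You yourself flag this in the final paragraph as the ``genuinely delicate point'' requiring ``structure theory\ldots rather than purely homological bookkeeping,'' which is an honest diagnosis but leaves the proof incomplete.

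The paper does not attempt any such direct argument. It invokes \cite[Theorem 2.1]{cos} as a black box: under exactly the hypotheses (i)--(iii), Costa--Simis--Ramos prove the structural equality $\R^{(I)}=R[It,\,Dt^{d'}]$ for the full symbolic Rees algebra. Reading off the $t$-graded pieces gives $I^{(\ell)}=\R^{(I)}_{\ell}=\R(I)_{\ell}=I^{\ell}$ for $\ell<d'$ in one line, and $D\in I^{(d')}\setminus I^{d'}$ for the other half. The substantive use of the $\Se_2$ hypothesis is buried inside that cited theorem (it is what pins down the initial degree you were unable to identify); your proposal redistributes the work but does not actually carry out the hard step.
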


\begin{proof}
In the light of \cite[Theorem 2.1]{cos} we observe that
the symbolic Rees algebra $\R^{(I)}:=\bigoplus_s I^{(s)}t^s$ is equal to $R[It, Dt^{d'}]$,
where $D$ is called   the \textit{source inversion}.  We note that $D$ is defined by the equation $$g_i(f_0,\ldots,f_n)=Dx_i\quad \forall i\quad(\ast)$$In particular, for all $i<d'$, one has $$I^{(i)}=\R^{(I)}_i=\R(I)_i=I^i$$
By $(\ast)$, we have $\deg(D)=dd'-1$. Recall that $I^{d'}$ has no element of degree less than $dd'$. Thus, $D\in \R^{(I)}_{d'}\setminus I^{d'}=I^{(d')}\setminus I^{d'}$.
\end{proof}

We give a non-rigid ideal:

\begin{example}
The primeness of the ideal is important. Let $R:=k[x,y,z]$ and let $d$ be any integer. Take $I:=(x^d,x^{d-1}y,y^{d-1}z)$.
This is the base ideal of  $\PP^2\dashrightarrow \PP^2$ and is of degree $d$. The base ideal is one-dimensional and  saturated. In particular, it is Cohen-Macaulay. Under this assumption
it is shown in \cite[Proposition 3.4]{cos} that the conclusion of Proposition \ref{sr} holds true.
Recall from Discussion \ref{disb} that $d=d'$.
It follows  that $I^i=I^{(i)}$ for all $i<d$ and $I^d\neq I^{(d)}$.
\end{example}

\begin{discussion}\label{dis}
Let us revisit   $I:=(yzw, xzw, xyw ,xyz)$ as an ideal in $A:=k[x,y,z,w]$.
Then $I$ is  square-free and is $3$-Veronese. Such an ideal is perfect, of height 2, 4-generated
and the depth of its powers are computed by the following table (see \cite[Corollary 10.3.7]{hh})\begin{equation*}
\depth(R/I^n)= \left\{
\begin{array}{rl}
2 & \  \   \   \   \   \ \  \   \   \   \   \ \text{if } n=1\\
1 &\  \   \   \   \   \ \  \   \   \   \   \ \text{if } n = 2\\
0 & \  \   \   \   \   \ \  \   \   \   \   \ \text{if } n > 2
\end{array} \right.
\end{equation*}
\end{discussion}

Let us show that the assumption ii) in Proposition \ref{sr} is really needed.

\begin{example}
Look at $I:=(yzw, xzw, xyw ,xyz)$ as an ideal in $R:=k[x,y,z,w]$. We note that $I$ is the base ideal
of the Cremona map $\textsc{F}:\PP^3\dashrightarrow \PP^3$.

i) In view of Example \ref{loc}, the module $I^{(2)}/I^{2} $  is not zero.
By Discussion \ref{dis}, $\depth (R/I^2)=1$. Thus, $\fm\notin\Ass(R/I^2)$.
Therefore, $I^2$ has no $\fm$-primary component. So, $I^{(2)}/I^{2} $ is not $\fm$-primary.

ii) Here, we show that $I$ respects the conditions i) and iii) in  Proposition  \ref{sr}.
 In the light of
 \cite{vi},
 $\overline{I^{n}}=I^{n}$ for all $n$. This means that $\R(I)$ is normal. Normal rings are $\Se_2$. Also, $\depth(R/I)=2>0 $.

iii) Suppose on the contradiction that  Proposition  \ref{sr} is true without its second assumption.
Recall that $\textsc{F}$ defines via the partial derivations of $f:=xyzw$.
This is well-known and classical from birational geometry that $\textsc{F}$ is
\textit{standard involution}, i.e., it is self-inverse as a rational map. In particular, $d=d'=3$.
In view of Proposition  \ref{sr}
 we should have $I^{2}=I^{(2)}$ which is a contradiction with  Example \ref{loc}.
\end{example}

\begin{example}Let us apply things in an  example due to Dolgachev:
Let $f:=x(xz+y^2)$. In view of \cite[Page 192]{dol}, the partial derivations  $\{\partial f/ \partial x, \partial f/ \partial y, \partial f/ \partial z\}$ define  a plane Cremona transformation. Such a transformation is called\textit{ polar }transformation.
It is easy to find that the base ideal $I=(2xz+y^2,xy,x^2)$ is perfect and of codimension $1$. Recall that $d'=d=2$. By Proposition \ref{sr},
$I^2\neq I^{(2)}$.
\end{example}

\begin{remark} Let us look at the system
$\{X^2,XY,WX+Y^2,ZX+W^2,UX+Z^2\}$. This defines a birational map $\PP^4\dashrightarrow\PP^4$ such that $d'= d^{n-1}$. Let $A:=k[X,Y,W,Z,U]$ and let $I:=(X^2,XY,WX+Y^2,ZX+W^2,UX+Z^2)$. Set $R:=\frac{A}{I}$. Here, we only check that $\depth(R)>0$:
We use lowercase letters to represent the elements in $R$.
We show $u$ is a regular element.  It turns out that $u$ is a homogeneous parameter element.
Look at $R$ as a $k[u]$-module. The set $\Gamma:=\{1,x,y,w,z,y^2,yw,w^2,wz,w^3,ywz\}$ is a generating set for $R$ as a $k[u]$-module.
Since $\Gamma$ is linearly independent over $k[u]$, we observe that
$R$ is free as a $k[u]$-module. Clearly, $u$ is regular over $k[u]$.
So, $u$ is regular, as claimed.
\end{remark}

\begin{conjecture}\label{gcon}
Assume that $R$ is a $d$-dimensional polynomial  ring over a field  and  $I\lhd$ be an  ideal.
Suppose there is a polynomial function $f$ of degree at most $d$ with coefficients depend  only on the degree of generators of $I$  and $d$ such that
$I^{(i)} = I^{i}$ for all $i<f$. Then $I^n=I^{(n)}$
for all $n \geq 1$.
\end{conjecture}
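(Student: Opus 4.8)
The plan is to reduce Conjecture~\ref{gcon} to an \emph{effective}, polynomially bounded version of the asymptotic stability of the associated primes of the powers of $I$. The point is that $I^{n}=I^{(n)}$ for all $n$ is equivalent to saying that no power $I^{n}$ acquires a primary component outside the components of $I$, i.e.\ that $\Ass(R/I^{n})$ never meets $\Spec(R)$ outside $\Ass(R/I)$; at the top prime $\fm$ this is exactly the condition $\depth(R/I^{n})>0$ read off from $I^{(n)}/I^{n}=\HH^{0}_{\fm}(R/I^{n})$ as in Facts~\ref{tr} and~\ref{locc}, and at the intermediate primes one localizes — symbolic powers commute with localization by Lemma~\ref{st} — and reads the corresponding statement in $R_{\fq}$. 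By a theorem of Brodmann there is an index $n_{0}$ with $\Ass(R/I^{n})=\Ass(R/I^{n_{0}})$ and $\depth(R/I^{n})=\depth(R/I^{n_{0}})$ for all $n\geq n_{0}$. Granting that the polynomial $f$ of the statement can be chosen with $f>n_{0}$, the hypothesis ``$I^{(i)}=I^{i}$ for all $i<f$'', applied at the single index $i=n_{0}$, pins down the associated primes and primary components of $I^{n_{0}}$; stability carries this to every $n\geq n_{0}$, giving $I^{n}=I^{(n)}$ there, while the finitely many remaining values $n<n_{0}<f$ are covered by the hypothesis itself. So everything comes down to producing such an $f$.

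In the structured ranges this is already at hand. When $I$ is perfect, strongly Cohen--Macaulay and $\G_{\infty}$, Lemma~\ref{5} supplies the explicit \emph{linear} bound $n_{0}\leq\mu(I)-\grade(I,R)$, which is certainly of the shape the conjecture permits; this is exactly what Theorem~\ref{f}, Corollary~\ref{f1} and their companions exploit, so Conjecture~\ref{gcon} holds verbatim for height-two Cohen--Macaulay locally complete intersection ideals, for ideals with $\mu(I)\leq\Ht(I)+2$, and the like. A first step would be to package all of these into one clean instance of the conjecture, replacing the ad hoc thresholds $d-1$, $3$, \dots\ by the uniform linear function $\mu(I)-\grade(I,R)+1$.

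The main obstacle is the passage from Brodmann's \emph{non-effective} stabilization to a bound $n_{0}$ that is polynomial, of degree at most $d$, in the degrees of the generators of $I$ (and in $d$). The known effective estimates go through the Castelnuovo--Mumford regularity of the Rees algebra $\R(I)$, or of $\bigoplus_{n}I^{n}/I^{n+1}$, and are at present far from polynomial; the linear bound of Lemma~\ref{5} disappears without the strongly Cohen--Macaulay and $\G_{\infty}$ hypotheses; and the monotonicity $\Ass(R/I^{i})\subseteq\Ass(R/I^{i+1})$ used in Proposition~\ref{thm}(b), which rested on the weak $d$-sequence property, genuinely fails for general ideals, so one cannot even assume $\Ass(R/I^{n})$ grows. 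A realistic route is therefore stratified: first prove a polynomial stabilization bound for further structured families — ideals with few generators relative to their height, ideals whose Rees algebra satisfies a Serre condition in the spirit of Proposition~\ref{sr} (whose conclusion already has the precise ``$I^{\ell}=I^{(\ell)}$ for $\ell<d'$ and $I^{d'}\neq I^{(d')}$'' shape, with $d'$ controlled by Gabber's bound of Discussion~\ref{disb}), square-free monomial ideals — verify the conjecture there, and only then attack the general effective bound, which is the genuine difficulty and is likely to demand new input in the homological asymptotics of powers of ideals.
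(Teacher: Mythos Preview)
The statement you are addressing is a \emph{conjecture} in the paper, not a theorem; the paper offers no proof whatsoever, only the bare statement followed immediately by the acknowledgements. There is therefore nothing in the paper to compare your proposal against.

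As for the proposal itself, it is not a proof but a programme, and you say as much: you correctly observe that the question reduces to an effective, polynomially bounded version of Brodmann stabilization for $\Ass(R/I^{n})$ (or $\depth(R/I^{n})$), you note that the structured cases already handled in the paper (Lemma~\ref{5}, Theorem~\ref{f}, Proposition~\ref{sr}) fit under this umbrella with explicit linear or Gabber-type bounds, and then you explicitly identify ``the main obstacle'' as producing such a bound in general and concede that the known effective estimates are ``far from polynomial''. That is an honest assessment of the state of affairs, but it leaves the conjecture exactly where the paper leaves it: open. Your reduction to effective Brodmann is sound and natural; what is missing is precisely the new input you yourself flag as ``the genuine difficulty''.
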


\begin{acknowledgement}
I thank Prof. Simis for his  comments on the very earlier version of this note.
\end{acknowledgement}

\end{document}